\documentclass[11pt]{amsart}
\usepackage{amssymb}
\vfuzz2pt 
\hfuzz2pt 
\newtheorem{thm}{Theorem}[section]
\newtheorem{cor}[thm]{Corollary}
\newtheorem{lem}[thm]{Lemma}

\newtheorem{prop}[thm]{Proposition}
\theoremstyle{definition}

\theoremstyle{remark}
\newtheorem{rem}[thm]{Remark}
\numberwithin{equation}{section}

\newcommand{\R}{\mathbb R }

\newcommand{\C}{\mathbb C }

\newcommand{\pa}{\partial }
\newcommand{\La}{\langle}
\newcommand{\Ra}{\rangle}

\newcommand{\Hc}{\mathcal H}
\newcommand{\sd}{{\mathbf S}^{d-1}}
\newcommand{\std}{{ \mathbf S}^{2d-1}}
\newcommand{\om}{ \omega}
\newcommand{\D}{\Delta}
\newcommand{\ap}{\alpha}
\newcommand{\bt}{\beta}
\newcommand{\N}{\nabla}


\begin{document}

\title[]{Revisiting Riesz transforms for Hermite and Special Hermite Operators}
\author{Pradeep Boggarapu}
\author{S. Thangavelu}

\address{department of mathematics,
Indian Institute of Science, Bangalore - 560 012, India}
\email{pradeep@math.iisc.ernet.in}
\email{veluma@math.iisc.ernet.in}

\keywords{Riesz transforms, Hermite functions, special Hermite
functions, Laguerre polynomials, weighted inequalities, singular
integrals.} \subjclass[2010] {Primary: 42C10, 47G40, 26A33. 43A90.
Secondary: 42B20, 42B35, 33C45.}

\begin{abstract}
In this paper we prove weighted mixed norm estimates for Riesz
transforms associated to Hermite and special Hermite operators.
The estimates are shown to be equivalent to vectorvalued esimates for a sequence of operators defined in terms of Laguerre functions of different type. 

\end{abstract}

\vspace{3mm}



\maketitle

\section{Introduction}
\setcounter{equation}{0} The boundedness of Riesz transforms
associated to Hermite and special Hermite operators have been
studied by several authors, see \cite{ST} and \cite{STR}. Here in
this article we are interested in mixed norm estimates. To fix the
notation let $H = -\triangle + |x|^{2} $ be the Hermite operator
on $\R^d$ which can be written as $H= \frac{1}{2}\sum _{j=1} ^{d}
(A_jA_j ^*+ A^*_jA_j)$, where $A_j= \frac{\partial}{\partial x_j}
+ x_j$ and $A_j ^* = - \frac{\partial}{\partial x_j} + x_j$ are
the annihilation and creation operators. The Riesz transforms
associated to the Hermite operator are defined by $R_j=A_j
H^{-\frac{1}{2}}$ and $R_j^*=A^*_j H^{-\frac{1}{2}}$, $j=1,2,
\cdots , d$. It is well known that they are bounded on
$L^p(\R^d)$, $1< p < \infty$ and
weak type $(1, 1)$. \\

Given a weight function $w(r)$ defined on $\mathbb{R}^+=(0, \infty
)$ we consider the mixed norm space $L^{p,2} (\R^d, w )$ which
consists of all measurable functions $f$ on $\R^d =
\mathbb{R}^+\times \mathbf{S}^{d-1}$ for which
$$\| f \| ^p _{L^{p,2}(\R^d, w)} = \int^{\infty} _0 \Big ( \int_{\sd} |f(r\omega)|^2 d\omega \Big )^{\frac{p}{2}} w(r) r^{d-1}dr $$
 are finite. One of our main results in this paper is the
 following.

 \begin{thm}
 For $j=1,2, \ldots , d $ we have
 $$\| R_j f \| _{L^{p,2}(\R^d, w)}\leq C \| f \|_{L^{p,2}(\R^d, w)}$$
 for all $1<p<\infty $, and $w \in A^{\frac{d}{2}-1}_p(\mathbb{R}^+)$. Similar estimates are valid
 for $R^*_j$ also.
 \end{thm}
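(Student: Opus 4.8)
The plan is to diagonalise the Riesz transform by means of the spherical harmonic decomposition and thereby convert the mixed norm estimate into a vector-valued weighted inequality on the half-line for a family of one-dimensional Laguerre-type operators. Write $x=r\om$ with $r>0$ and $\om\in\sd$, and expand
$$f(r\om)=\sum_{k\ge 0}\sum_{m=1}^{d_k} f_{k,m}(r)\,Y_{k,m}(\om),$$
where $\{Y_{k,m}:1\le m\le d_k\}$ is an orthonormal basis for the spherical harmonics of degree $k$. Orthonormality on $\sd$ gives
$$\|f\|^p_{L^{p,2}(\R^d,w)}=\int_0^\infty\Big(\sum_{k,m}|f_{k,m}(r)|^2\Big)^{p/2}w(r)\,r^{d-1}\,dr,$$
so that $L^{p,2}(\R^d,w)$ is isometric to $L^p\big((0,\infty),\,w(r)r^{d-1}dr;\,\ell^2\big)$, and Theorem 1.1 becomes the assertion that the operator induced by $R_j$ on sequences of radial functions is bounded on this space.

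Next I would record the action of $H$ and of $A_j$ in this basis. On each subspace $\{g(r)Y_{k,m}(\om)\}$ the operator $H$ reduces to a one-dimensional operator $L_k$ — the radial part of $H$ on degree-$k$ harmonics — which is unitarily equivalent to a Laguerre operator with parameter $\al_k=k+\tfrac d2-1$, with spectral expansion in the Laguerre functions $\{\psi_n^{\al_k}\}_n$. By contrast, $A_j=\pa_{x_j}+x_j$ and $A_j^*=-\pa_{x_j}+x_j$ do not preserve these subspaces: using the decomposition of $x_jP(x)$ into solid harmonics of degrees $k\pm 1$, together with the fact that $\pa_{x_j}$ lowers the degree of a solid harmonic by one, one sees that $A_j$ maps degree $k$ into degrees $k-1$ and $k+1$. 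Hence, in the spherical harmonic basis, $R_j=A_jH^{-1/2}$ has a block-tridiagonal form whose nonzero $(k',k)$ blocks occur only for $k'=k\pm 1$ and are scalar operators on the radial variable built from the Laguerre expansions of the \emph{two different types} $\al_k$ and $\al_{k'}$. This is the ``sequence of operators defined in terms of Laguerre functions of different type'' referred to in the abstract, and the whole problem is thereby reduced to bounding the associated $\ell^2$-valued operator on $L^p\big((0,\infty),w(r)r^{d-1}dr;\ell^2\big)$.

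For the boundedness I would proceed in two stages. First, prove the scalar estimate $\|T^{k',k}g\|_{L^p(w\,r^{d-1}dr)}\le C\|g\|_{L^p(w\,r^{d-1}dr)}$ for $w\in A_p^{d/2-1}(\mathbb{R}^+)$ \emph{with $C$ independent of $k$}, where $T^{k',k}$ denotes the $(k',k)$ block. Writing $T^{k',k}$ via the subordination formula applied to the Laguerre heat semigroup of $L_k$ produces an explicit integral kernel on $(0,\infty)$; I would then show this kernel satisfies Calderón--Zygmund size and Hörmander smoothness bounds relative to the space of homogeneous type $\big((0,\infty),r^{d-1}dr\big)$, uniformly in $k$ (cf.\ the one-dimensional Laguerre estimates in \cite{ST}). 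Combined with $L^2$-boundedness — which for $w\equiv 1$, $p=2$ is just the known boundedness of $R_j$ on $L^2(\R^d)$ — the weighted Calderón--Zygmund theorem yields the scalar bound. Second, because all of these kernel estimates are uniform in $(k',k)$, the same reasoning applied to the $\ell^2$-valued kernel gives a vector-valued Calderón--Zygmund operator, for which $A_p^{d/2-1}$ weights are again admissible; this produces the required bound on $L^p\big((0,\infty),w(r)r^{d-1}dr;\ell^2\big)$ and hence Theorem 1.1. The estimate for $R_j^*$ is handled identically, the only change being the roles of $\al_k$ and $\al_{k'}$.

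The crux is the \emph{uniformity in the harmonic degree}: the Laguerre parameter $\al_k=k+\tfrac d2-1$ grows with $k$, and the naive kernel bounds for Laguerre semigroups and Riesz transforms deteriorate as the parameter tends to infinity, so one must exploit the cancellation coming from the pairing of two neighbouring types and track the $k$-dependence through sharp asymptotics of $\psi_n^{\al}$ (or through a closed form of the kernel of $A_jH^{-1/2}$ restricted to a harmonic subspace). A subsidiary point, needing care but not conceptually hard, is the bookkeeping of Muckenhoupt classes: passing from Lebesgue measure on $\R^d$ to $r^{d-1}dr$ on $(0,\infty)$ and conjugating $L_k$ to a symmetric Laguerre operator is exactly what turns $A_p$ on $\R^d$ into $A_p^{d/2-1}$ on $\mathbb{R}^+$, and one must check that this is precisely the class for which the Calderón--Zygmund machinery on $\big((0,\infty),r^{d-1}dr\big)$ applies uniformly.
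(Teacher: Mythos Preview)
Your strategy—expand in spherical harmonics, reduce $R_j$ to a block-tridiagonal operator on $\ell^2$-valued functions on $\R^+$, and prove uniform-in-$k$ Calder\'on--Zygmund bounds for the blocks—is the route taken by Ciaurri and Roncal in \cite{CR}, and it does lead to Theorem~1.1. However, it is \emph{not} the paper's main argument, and the difference is exactly at the point you flag as ``the crux.''

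The paper's first proof never decomposes into spherical harmonics. Instead it views $R_j$ directly as an integral operator on $L^p(\R^+,\Hc;\,d\mu_{\frac d2-1})$ with $\Hc=L^2(\sd)$ and operator-valued kernel
\[
R_j(r,s)\varphi(\om)=\int_{\sd}R_j(r\om,s\om')\,\varphi(\om')\,d\om'.
\]
The pointwise bounds $|R_j(x,y)|\le C|x-y|^{-d}$ and $|\nabla_x R_j(x,y)|\le C|x-y|^{-d-1}$ (Proposition~2.1) give
\[
\|R_j(r,s)\|_{Op}\le C\sup_{\om}\int_{\sd}|r\om-s\om'|^{-d}\,d\om',
\]
and a single elementary lemma (Lemma~2.4) bounds this spherical integral by $C\bigl(\mu_{\frac d2-1}(B(r,|r-s|))\bigr)^{-1}$, with an analogous estimate for $\pa_r R_j(r,s)$. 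Vector-valued Calder\'on--Zygmund theory on the homogeneous space $(\R^+,d\mu_{\frac d2-1})$ then finishes the proof. No Laguerre asymptotics and no tracking of the parameter $\al_k$ are needed: the operator norm of $R_j(r,s)$ is controlled in one stroke rather than block by block.

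What each approach buys: yours exposes the Laguerre structure explicitly, and indeed the paper runs the implication \emph{in reverse}, deducing the vector-valued Laguerre inequality (Theorem~2.6) \emph{from} Theorem~1.1 rather than the other way round. The paper's direct proof is shorter and sidesteps entirely the uniform-in-$k$ kernel estimates you would have to supply; as written, your proposal leaves that step (``exploit the cancellation \ldots\ track the $k$-dependence through sharp asymptotics'') as a promissory note, and it is genuinely nontrivial—\cite{CR} devote substantial effort to it. The paper also gives a second, rotation-based proof via the Herz--Rivi\`ere extension lemma and the transformation law $T_u\rho(k)f=\rho(k)T_{k\cdot u}f$, which again avoids any harmonic-by-harmonic analysis.
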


 In the above theorem $A^{\alpha}_p(\mathbb{R}^+)$ is the Muckenhoupt's $A_p$ -
 class defined on $\mathbb{R}^+$ for $\alpha \geq - \frac{1}{2} $ with respect to the measure $d\mu _{\alpha}(r) =
 r^{2 \alpha +1}dr$. The above result leads to a vector valued inequality
 for a sequence of Laguerre Riesz transforms, see Theorem 2.6.\\

 We also consider the Riesz transforms $S_j$ and $\overline{S}_j$
 associated to the special Hermite operator $L= - 2 \sum _{j=1} ^{d} (Z_j\overline{Z}_j + \overline{Z}_j Z_j ) $
 where $Z_j = \frac{\partial}{\partial z_j} + \frac{1}{4}
 \overline{z}_j$ and $\overline{Z}_j = \frac{\partial}{\partial \overline{z}_j} - \frac{1}{4}
 z_j$. Again using spectral theorem we define $S_j = Z_j
 L^{- \frac{1}{2}}$ and $\overline{S}_j = \overline{Z}_j L^{- \frac{1}{2}}$. For
 these operators we prove

 \begin{thm}

 For $j=1,2, \ldots , d $ we have
 $$\| S_j f \| _{L^{p,2}(\C^d, w)}\leq C \| f \|_{L^{p,2}(\C^d, w)}$$
 for all $1<p<\infty $, and $w \in A^{d-1}_p(\mathbb{R}^+)$.
 Similar estimates are valid for $\overline{S}_j$ also.
 \end{thm}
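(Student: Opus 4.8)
The plan is to run the same machine as in the proof of Theorem 1.1, transferring the problem on $\C^d\cong\R^{2d}$ to a one-dimensional (radial) problem via the bigraded spherical harmonic decomposition on $\std$. Write $z=\rho\,\om$ with $\rho>0$, $\om\in\std$, and let $\{Y^{j}_{p,q}\}_{j}$ be an orthonormal basis (on $\std$) of the restrictions of $\Hc_{p,q}$, the space of harmonic polynomials on $\C^d$ homogeneous of bidegree $(p,q)$. Every $f\in L^{p,2}(\C^d,w)$ then expands as $f(\rho\om)=\sum_{p,q}\sum_{j}f^{j}_{p,q}(\rho)\,Y^{j}_{p,q}(\om)$, and Plancherel on $\std$ gives $\int_{\std}|f(\rho\om)|^{2}\,d\om=\sum_{p,q,j}|f^{j}_{p,q}(\rho)|^{2}$, so that $\|f\|_{L^{p,2}(\C^d,w)}^{p}=\int_{0}^{\infty}\big(\sum_{p,q,j}|f^{j}_{p,q}(\rho)|^{2}\big)^{p/2}w(\rho)\rho^{2d-1}\,d\rho$. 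Thus it suffices to control the radial profiles of $S_jf$ in $\ell^{2}(p,q,j)$ with the pointwise weight $w(\rho)\rho^{2d-1}$, i.e. in $L^{p}(\R^{+},w\,d\mu_{d-1};\ell^{2})$.

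For the reduction I would use that $Z_j$ and $\overline{Z}_j$ act as ladder operators between the bigraded sectors: on the subspace spanned by $\{P(z)\varphi(\rho):P\in\Hc_{p,q}\}$ the operator $Z_j$ lands in the analogous subspaces attached to $\Hc_{p+1,q}$ and $\Hc_{p-1,q}$ (symmetrically for $\overline{Z}_j$), while $L$ reduces on each such sector to a one-dimensional Laguerre operator $L_{\alpha}$ with $\alpha=d-1+p+q$ (up to the usual normalisation). Consequently $S_j=Z_jL^{-1/2}$ acts on the radial part $f^{j}_{p,q}$ as a finite sum of Laguerre Riesz transforms of the type $\delta_{\alpha}L_{\alpha}^{-1/2}$, with $\delta_{\alpha}$ a first-order radial ladder operator — exactly the operators occurring in Theorem 2.6. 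After this reduction the bound of Theorem 1.2 becomes the assertion that the whole family $\{\delta_{\alpha}L_{\alpha}^{-1/2}\}$ is bounded on $L^{p}(\R^{+},w\,d\mu_{d-1};\ell^{2})$ for $w\in A^{d-1}_p(\R^{+})$, which is the vector-valued Laguerre inequality analogous to Theorem 2.6; granting it, the theorem follows, and the estimate for $\overline{S}_j$ is identical.

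The main obstacle is this vector-valued Laguerre estimate, i.e. the $\ell^{2}$-valued weighted bound for the entire family with a constant independent of the type parameter $\alpha=d-1+p+q$. For this I would realise each Laguerre Riesz transform as a Calder\'on--Zygmund operator on the space of homogeneous type $(\R^{+},d\mu_{d-1})$ and prove size and smoothness estimates for its kernel that are uniform in $\alpha$; the delicate points are the behaviour near the diagonal and the control of the global part of the kernel as $\alpha\to\infty$, and it is here that the class $A^{d-1}_p$ (the $A_p$ class for this homogeneous space) is the right one. Once uniform Calder\'on--Zygmund bounds are available, the passage to $\ell^{2}$-valued operators is the standard Banach-space-valued Calder\'on--Zygmund theory, the Muckenhoupt weighted bound on $(\R^{+},d\mu_{d-1})$ closes the argument, and the remaining edge cases (small $d$, and $\alpha$ near $-1/2$) are checked by hand.
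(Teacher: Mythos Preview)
Your approach differs from the paper's and also has a real gap.

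The paper does not prove Theorem 1.2 via the Laguerre reduction you describe. Instead (Section 3.3) it runs the Rubio de Francia transference of Section 2.2, now with the unitary group $U(d)$ in place of $SO(d)$: Proposition 3.2 gives the transformation law $\rho(k)R_P f = R_{\rho(k)P}(\rho(k)f)$, the operators $S_j$ are oscillatory singular integrals on $\C^d$ and hence bounded on $L^p(\C^d, w\,dz)$ for every $w\in A_p(\C^d)$ by the Lu--Zhang theorem (Theorem 3.3), and the Herz--Riviere extension lemma together with the Duoandikoetxea characterisation of radial $A_p$ weights finishes. No Laguerre kernel analysis enters the proof of Theorem 1.2; the Laguerre inequalities in Sections 3.4--3.5 are \emph{deduced from} Theorem 1.2, not used to prove it.

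Your reduction, by contrast, misidentifies the radial operators. On the bidegree-$(p,q)$ sector the special Hermite semigroup is not simply the Laguerre semigroup of type $d-1+p+q$: by the Hecke--Bochner formula for special Hermite projections (Proposition 3.4 and Corollary 3.6) there is an extra factor $e^{-t(p-q)}$, so that $L$ restricted to that sector is, after conjugation by $r^{p+q}$, the \emph{shifted} operator $L_{d-1+p+q}+(p-q)$. The radial pieces you must control are therefore of the form $\delta_\alpha\bigl(L_\alpha+(p-q)\bigr)^{-1/2}$, not the standard Laguerre Riesz transforms $\delta_\alpha L_\alpha^{-1/2}$ of Theorem 2.6. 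When $p\geq q$ the shifted kernels are pointwise dominated by the unshifted ones and the Hermite-side estimates of Section 2 apply; this is exactly what the paper carries out in Section 3.5, and it yields only Theorem 3.8 for $f\in L^p_h(\C^d)$. When $p<q$ the factor $e^{-t(p-q)}$ grows as $t\to\infty$, and your uniform Calder\'on--Zygmund programme would require separate, non-obvious estimates that you have not supplied. The paper's group-theoretic argument sidesteps this asymmetry between $p$ and $q$ entirely.
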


 In this article we give three different proofs of Theorem 1.1 .
In a forthcoming paper \cite{BT} we prove mixed norm estimates for
Riesz transforms associated to Dunkl harmonic oscillator. Of the
three proofs mentioned above, only one proof can be modified to
treat the case of Dunkl harmonic oscillator.

\section{Riesz transforms for the Hermite Operator}

\subsection{Mixed norm estimates for Hermite Riesz transforms} We first
recall some results on Hermite expansions required for the proof
of Theorem 1.1. If $e^{-tH}$ is the Hermite semigroup generated by
$H$, then it is well known that
$$e^{-tH}f(x) = \int_{\R^d} K_t(x,y)f(y)dy $$
where the kernel is explicitly given by
$$K_t(x,y)= (2\pi)^{-\frac{d}{2}}(\sinh 2t )^{-\frac{d}{2}} e^{-\frac{1}{4}(\coth t)|x-y|^2}
e^{-\frac{1}{4}(\tanh t)|x+y|^2}.$$ The operator
$H^{-\frac{1}{2}}$ is then defined by
$$H^{-\frac{1}{2}} f(x) = \frac{1}{\sqrt{\pi}} \int _0 ^\infty e^{-tH}f(x) t^{-\frac{1}{2}}dt$$ and consequently the Riesz transforms $R_j$ are given by

$$R_jf(x) = \int_{\R^d} R_j(x,y)f(y)dy $$
where

\begin{eqnarray}
  R_j(x,y)= \frac{1}{\sqrt{\pi}} \int _0 ^ \infty \Big (
\frac{\partial}{\partial x_j} +x_j \Big ) K_t(x,y)
t^{-\frac{1}{2}} dt.
\end{eqnarray}
The following estimates on the kernel $R_j$ are known (see Theorem
3.3 in \cite{STR}).

\begin{prop}

For each $j=1,2, \ldots , d$; $x,y \in \R^d$ and $x\neq y$, there exist constants $C_j$ and $C'_j$ such that \\

 (1)  $ |R_j(x,y)| \leq C_j |x-y|^{-d} $\\

 (2) $| \nabla _x R_j(x,y)| \leq C'_j |x-y|^{-d-1}$.\\

\end{prop}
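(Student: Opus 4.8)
The plan is to insert the explicit formula for $K_t$ into the defining integral for $R_j(x,y)$ and carry out the $x_j$-differentiation. Since $K_t(x,y)$ is a product of Gaussians in $x-y$ and in $x+y$, a direct computation gives
\[
\Big(\frac{\partial}{\partial x_j}+x_j\Big)K_t(x,y)=\Big[\tfrac12(1-\coth t)(x_j-y_j)+\tfrac12(1-\tanh t)(x_j+y_j)\Big]K_t(x,y),
\]
the cancellation that puts the prefactor in this form coming from $x_j=\tfrac12(x_j-y_j)+\tfrac12(x_j+y_j)$. Recording the prefactor this way is the whole point: $|1-\coth t|\le t^{-1}$ and $|1-\tanh t|\le 1$ for $t\in(0,1)$, while both are $O(e^{-2t})$ as $t\to\infty$, so no spurious growth in $t$ is introduced and $R_j(x,y)$ is reduced to an integral of elementary factors.

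Next I would split $\int_0^\infty$ at $t=1$. On $(1,\infty)$ everything is tame: $(\sinh 2t)^{-d/2}\le Ce^{-dt}$, the coefficients $1-\coth t$, $1-\tanh t$ are $O(e^{-2t})$, the quantities $|x+y|\,e^{-\frac14(\tanh t)|x+y|^2}$ are bounded since $\tanh t\ge\tanh 1$, and $e^{-\frac14(\coth t)|x-y|^2}\le e^{-\frac14|x-y|^2}$ because $\coth t\ge1$; so this part of $R_j(x,y)$ is $\le C(1+|x-y|)e^{-\frac14|x-y|^2}\le C|x-y|^{-d}$ for every $x\ne y$. On $(0,1)$ I would use $\sinh2t\ge2t$, $\coth t\ge t^{-1}$, $\tanh t\ge ct$ and the bounds on $1-\coth t$, $1-\tanh t$ above, the one delicate step being to absorb the ``creation'' contribution $\tfrac12(1-\tanh t)(x_j+y_j)$ into the off-diagonal Gaussian through $|x+y|\,e^{-\frac14(\tanh t)|x+y|^2}\le Ct^{-1/2}$. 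What survives are integrals of the form $\int_0^1 t^{-a}e^{-|x-y|^2/(4t)}\,dt$, and the substitution $u=|x-y|^2/(4t)$ turns each of them into $C|x-y|^{2-2a}\int_{|x-y|^2/4}^\infty u^{a-2}e^{-u}\,du\le C|x-y|^{2-2a}$; bookkeeping the exponents ($a=\tfrac d2+\tfrac32$ plus a free factor $|x-y|$ for the $(x_j-y_j)$-term, $a=\tfrac d2+1$ for the $(x_j+y_j)$-term) gives $|x-y|^{-d}$ in both cases, which is \textup{(1)}.

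For \textup{(2)} I would differentiate the same integral once more in $x$. A derivative $\partial_{x_i}$ either falls on the polynomial prefactor, giving a term with coefficient $\tfrac12(1-\coth t)\delta_{ij}+\tfrac12(1-\tanh t)\delta_{ij}=O(t^{-1})$ near the origin, or falls on $K_t$, producing the factor $-\tfrac12(\coth t)(x_i-y_i)-\tfrac12(\tanh t)(x_i+y_i)$, of size $\le C(|x-y|t^{-1}+|x+y|)$ for small $t$. After absorbing the surviving powers of $|x+y|$ into $e^{-\frac14(\tanh t)|x+y|^2}$ once more (now also using $|x+y|^2e^{-\frac14(\tanh t)|x+y|^2}\le Ct^{-1}$), every term carries exactly one extra factor of $t^{-1}$ relative to the proof of \textup{(1)}, so the same substitution yields $|x-y|^{-d-1}$, while the tail over $(1,\infty)$ is again bounded by $C(1+|x-y|^2)e^{-\frac14|x-y|^2}\le C|x-y|^{-d-1}$. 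The main obstacle throughout is that $R_j(x,y)$ depends genuinely on $x+y$ and not only on $x-y$: unlike the classical Riesz kernels, $|x+y|$ is not dominated by $|x-y|$, so the ``creation'' piece $x_j$ cannot be discarded and must be controlled via the Gaussian $e^{-\frac14(\tanh t)|x+y|^2}$, which is precisely what forces the careful accounting of the negative powers of $t$ near $t=0$. (These are the estimates of Theorem 3.3 in \cite{STR}, which could also simply be quoted.)
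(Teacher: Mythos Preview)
The paper does not prove this proposition at all; it merely records it as a known fact and refers the reader to Theorem~3.3 in \cite{STR}. Your sketch is a correct direct proof: the identity
\[
\Big(\frac{\partial}{\partial x_j}+x_j\Big)K_t(x,y)=\Big[\tfrac12(1-\coth t)(x_j-y_j)+\tfrac12(1-\tanh t)(x_j+y_j)\Big]K_t(x,y)
\]
is right, the size bounds on $1-\coth t$, $1-\tanh t$, $\sinh 2t$, $\coth t$, $\tanh t$ for $t\in(0,1)$ and $t>1$ are all valid, the absorption of the $|x+y|$-factors into $e^{-\frac14(\tanh t)|x+y|^2}$ is the key device for handling the creation part, and the gamma-type substitution $u=|x-y|^2/(4t)$ converts the remaining $\int_0^1 t^{-a}e^{-|x-y|^2/(4t)}\,dt$ into the claimed powers of $|x-y|$ (the relevant exponents $a=\tfrac d2+\tfrac32,\ \tfrac d2+1$ in (1) and their shifts in (2) all satisfy $a>1$, so the resulting integrals are finite). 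Since you also note at the end that one may simply quote \cite{STR}, you have in effect done more than the paper does here.
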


 As $R_j$ are clearly bounded on $L^2(\R^d)$, the above estimates
 lead to the boundedness of $R_j$ on $L^p(\R^d)$, $1<p<\infty $
 and also the weak type $(1,1)$ estimate. As we are interested in
 mixed norm estimates, we consider $L^{p ,2}(\R^d)$ as $L^{p}(\R^+, L^2(\sd))$ i.e,
 as the $L^p$ space on $\R^+$ taken with respect to the
 measure $d\mu _{\frac{d}{2}-1}$ of functions taking values in the
 Hilbert space $\mathcal{H}= L^2(\sd )$. Thus given $f \in L^{p , 2}(\R^d)$
 and $s \in \R^+$ we consider $f(s)$ defined on $\sd$ by $ f(s)(\om)=
 f(s\om) $ as an element of $\mathcal{H}$.
 Note that $$\| f \| _{L^{p,2}(\R^d)}= \Big ( \int_0 ^{\infty} \|f(s)\|^p_{\mathcal{H}} d\mu_{\frac{d}{2}-1}(s)\Big
 )^{\frac{1}{p}}.$$
 For $r,s \in \R^+$, $r\neq s$ consider the linear operator $R_j(r,s): L^2(\sd)\rightarrow
 L^2(\sd)$ defined by
 $$
 R_j(r,s)\varphi (\om) = \int _{\sd}R_j(r\om , s\om ')\varphi(\om ')d \om '.
 $$
 It is then clear that $R_j(r,s)$ is a bounded operator on $L^p(\sd)$ and
 $$\|R_j(r,s)\|_{Op} \leq \sup_{\om} \int_{\sd} |R_j(r\om, s\om ')|d\om '.$$
 We can now view of $R_j$ as an integral operator defined on $L^p(\R^+, \mathcal{H})$. Indeed,
 we see that
 \begin{eqnarray}
 R_jf(r\om)=\int_0 ^\infty(R_j(r,s)f(s))(\om)s^{d-1}ds
 \end{eqnarray}
 and hence we will study $R_j$ by means of singular integral
 operators on $L^p(\R^+, \mathcal{H})$. \\

  For any $ \alpha \geq -\frac{1}{2}$ we consider
  $\R^+$ equipped with the measure $ d\mu _{\alpha}(r) = r^{2\alpha + 1}dr $ as a homogeneous space.
  Let $B(a,b)$ stand for the ball of radius $b>0$  centered at $a\in
  \R^+$. We say that a non negative and locally integrable
  function $w$ is in $ A^\alpha _p(\R ^+)$, $1<p<\infty $ if $w$
  satisfies
   $$\Big ( \frac{1}{\mu _\alpha (Q)} \int_{Q} w(r) d\mu_\alpha \Big )\Big ( \frac{1}{\mu _\alpha (Q)} \int_{Q} w(r)^{-\frac{p'}{p}} d\mu_\alpha \Big )^{p-1}
  \leq C$$
  for all intervals $Q\subset \R^+ $.
  We consider a singular integral operator $T$
  acting on vector valued ($\mathcal{H}$-valued ) functions on
  $\R^+$ associated to an operator valued kernel $K$
  which is defined on $ \R^+ \times \R^+ $ and taking values in
  $\mathcal{L}(\mathcal{H}, \mathcal{H})$, the space of bounded linear
  operators on $ \mathcal{H }$. That is to say
  $$Tf(r)= \int_0^\infty K(r,s)f(s)d\mu_{\alpha}(s) $$
  for all bounded and compactly supported vector valued functions
  $ f $ and $ r \notin supp f$. The following theorem tells us the
  boundedness of $ T$ under some conditions on $ K $.

  \begin{thm}
  Suppose that $T $ defined as above is bounded
  on $L^2(\R^+, \mathcal{H})$. Assume that $ K $
  satisfies the following conditions
  \begin{enumerate}
  \item $\|K(r,s)\| _{op} \leq C_1 \Big ( \mu_\alpha(B(r, |r-s|))\Big )^{-1} $ and

  \item $\| \pa _r K(r,s)\|_{op} \leq C_2|r-s|^{-1} \Big (\mu_\alpha(B(r,|r-s|))\Big )^{-1}$ for $r \neq s$.
  \end{enumerate}
  Then $T$ is bounded on $L^p(\R^+, \mathcal{H} )$, $1<p<\infty$ and weak type
  $(1,1)$. More generally, if $w \in A^\alpha _p(\R
  ^+)$, then we have
  $$ \|Tf\|_{L^p(\R^+, \mathcal{H}, wd\mu _\alpha)} \leq C \|f\| _{L^p(\R^+, \mathcal{H}, wd\mu
  _\alpha)}$$ for $1<p<\infty$.
 \end{thm}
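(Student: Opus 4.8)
The plan is to recognize Theorem 2.3 as an instance of the general Calderón–Zygmund theory for vector-valued (operator-valued) singular integrals on a space of homogeneous type, applied to the homogeneous space $(\R^+, d\mu_\alpha)$. First I would verify that $(\R^+, d\mu_\alpha, |\cdot|)$ is indeed a space of homogeneous type: the measure $d\mu_\alpha(r)=r^{2\alpha+1}dr$ is doubling on $\R^+$ for every $\alpha\geq -\tfrac12$, since $\mu_\alpha(B(r,2\rho))\leq C_\alpha \mu_\alpha(B(r,\rho))$ with a constant depending only on $\alpha$ (one checks this separately in the regimes $\rho \lesssim r$ and $\rho \gtrsim r$). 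With doubling in hand, the hypotheses (1) and (2) on $K$ are exactly the size and regularity (Hörmander-type) conditions needed to run the standard machinery.

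The key steps, in order, are: (i) from the pointwise bound (2) and the mean value theorem, derive the integrated Hörmander condition $\int_{|r-s|\geq 2|s-s'|}\|K(r,s)-K(r,s')\|_{op}\,d\mu_\alpha(r)\leq C$ uniformly in $s,s'$, using doubling to sum the dyadic annuli $\{2^k|s-s'|\leq |r-s|<2^{k+1}|s-s'|\}$ — on each such annulus the bound $|r-s|^{-1}\mu_\alpha(B(r,|r-s|))^{-1}|s-s'|$ integrated against $d\mu_\alpha$ contributes $\lesssim 2^{-k}$; (ii) invoke the assumed $L^2(\R^+,\Hc)$-boundedness of $T$ together with condition (1) and the Hörmander condition to conclude, via the vector-valued Calderón–Zygmund decomposition on the homogeneous space, that $T$ is weak type $(1,1)$ and hence, by Marcinkiewicz interpolation with the $L^2$ bound and duality, bounded on $L^p(\R^+,\Hc)$ for $1<p<\infty$; (iii) for the weighted statement, run the good-$\lambda$ inequality of Coifman–Fefferman relating $Tf$ to the Hardy–Littlewood maximal function $M_{\mu_\alpha}f$ (taken with respect to $d\mu_\alpha$), which holds once $T$ is of weak type $(1,1)$ and satisfies the Hörmander condition; since $M_{\mu_\alpha}$ is bounded on $L^p(wd\mu_\alpha)$ precisely for $w\in A^\alpha_p(\R^+)$, the $A_p$ weighted bound for $T$ follows. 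All of this is the operator-valued (Banach-space valued) version of the scalar theory; since $\Hc=L^2(\sd)$ is a Hilbert space there are no UMD or geometric obstructions, and the scalar proofs transcribe verbatim with $|K(r,s)|$ replaced by $\|K(r,s)\|_{op}$.

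I expect the only genuine work to be step (i) — checking that conditions (1) and (2), which are stated with the ``ball measure'' normalization $\mu_\alpha(B(r,|r-s|))^{-1}$ rather than a pure power of $|r-s|$, really do yield the Hörmander condition after summing over scales; this is where the doubling property of $\mu_\alpha$ is used essentially, and one must be a little careful near the origin where $B(r,|r-s|)$ can be the whole interval $(0, r+|r-s|)$ and $\mu_\alpha$ behaves like $(r+|r-s|)^{2\alpha+2}$ rather than $|r-s|\cdot r^{2\alpha+1}$. Everything else is a direct citation of the Calderón–Zygmund and Coifman–Fefferman theory for homogeneous spaces (e.g. the Hardy–Littlewood maximal theorem for $A_p$ weights and the good-$\lambda$ method), applied in the vector-valued setting. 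I would therefore present the proof as: (a) $\mu_\alpha$ is doubling; (b) (1)+(2) $\Rightarrow$ Hörmander; (c) quote the homogeneous-space CZ theorem and its weighted refinement to finish.
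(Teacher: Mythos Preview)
Your proposal is correct and follows essentially the same route as the paper's proof. The paper's argument is precisely: (a) from hypotheses (1) and (2) derive the integrated H\"ormander conditions $\int_{|r-s|>2|s-t|}\|K(r,s)-K(r,t)\|_{op}\,d\mu_\alpha(r)\leq C$ (and the symmetric one), then (b) cite the vector-valued Calder\'on--Zygmund theorem on homogeneous spaces (Theorem~1.1 of \cite{GLY}) for the unweighted $L^p$ and weak $(1,1)$ bounds, and (c) for the weighted case, point to the proof of Theorem~7.11 in Duoandikoetxea's book \cite{DJ} (which is exactly the good-$\lambda$/Coifman--Fefferman machinery you describe), adapted to this setting. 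Your write-up is more explicit about the doubling verification and the dyadic summation in step~(i), but the strategy is identical.
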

 \begin{proof} By an easy calculation using the hypothesis on $K$, we
 can show that there exists a positive constant $C$ such that for
 all $s, t \in \R^+ $
 $$\int_{|r-s|> 2|s-t|}\| K(r,s)-K(r,t)\|_{op}d\mu_{\alpha}(r) \leq C $$
 and
 $$\int_{|r-s|> 2|s-t|}\| K(s,r)-K(t,r)\|_{op}d\mu_{\alpha}(r) \leq C. $$
 For the un-weighted case, the boundedness of
 $T$ is proved in Theorem 1.1 in \cite{GLY} and for the
 weighted case, the boundedness of $T$ can
 be proved by imitating the proof of Theorem 7.11, page no. 144 in \cite{DJ} by
 changing the notations and notions(definitions) into our setting.
 \end{proof}
 As we have observed, the Riesz transforms can be considered as
 operators defined on $L^2(\R^+, \Hc; d\mu _{\frac{d}{2}-1})$,
 $\Hc =L^2(\sd)$:
 $$
 R_jf(r) = \int_0 ^\infty R_j(r,s)f(s)d\mu_{\frac{d}{2}-1}(s).
 $$
 Then $R_j$ is clearly bounded on $L^2(\R^+, \Hc; d\mu
 _{\frac{d}{2}-1})$. So, in order to prove Theorem 1.1, all we need
 is the following proposition giving estimates on the operator
 valued kernels $R_j(r,s)$.
 \begin{prop}
For any $r \neq s$ we have\\

(1) $ \|R_j(r,s) \|_{Op} \leq C_1 \Big (\mu_{\frac{d}{2}-1}(B(r,|r-s|))\Big )^{-1} $\\

(2) $\| \pa_r R_j(r,s)\|_{Op} \leq C_2 |r-s|^{-1}\Big (\mu
 _{\frac{d}{2}-1} (B(r,|r-s|))\Big )^{-1}$
\end{prop}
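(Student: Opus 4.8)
The plan is to deduce the operator-norm estimates for $R_j(r,s)$ and $\partial_r R_j(r,s)$ directly from the pointwise kernel bounds of Proposition 2.1 by a combination of the Schur test and a comparison of Euclidean distances on $\R^d$ with distances on the sphere. First I would observe that for $\varphi \in L^2(\sd)$,
$$\|R_j(r,s)\varphi\|_{\mathcal H} \leq \Big(\sup_{\om}\int_{\sd}|R_j(r\om,s\om')|\,d\om'\Big)^{1/2}\Big(\sup_{\om'}\int_{\sd}|R_j(r\om,s\om')|\,d\om\Big)^{1/2}\|\varphi\|_{\mathcal H},$$
so by symmetry it suffices to bound $\sup_{\om}\int_{\sd}|R_j(r\om,s\om')|\,d\om'$, and similarly with $R_j$ replaced by $\nabla_x R_j$ for part (2). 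Thus the whole problem reduces to the scalar integral estimate
$$\int_{\sd}\frac{d\om'}{|r\om-s\om'|^{d}} \leq C\big(\mu_{\frac d2-1}(B(r,|r-s|))\big)^{-1},$$
together with the analogous bound with exponent $d+1$ and an extra factor $|r-s|^{-1}$ coming from $|\nabla_x R_j(x,y)|\le C'|x-y|^{-d-1}$ and the chain rule $\partial_r\big(R_j(r\om,s\om')\big) = \om\cdot(\nabla_x R_j)(r\om,s\om')$.

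The key geometric computation is to estimate $\int_{\sd}|r\om-s\om'|^{-d}\,d\om'$ (for fixed $\om$, WLOG the north pole). Writing $|r\om-s\om'|^2 = (r-s)^2 + 2rs(1-\cos\theta)$ where $\theta$ is the angle between $\om$ and $\om'$, and using $1-\cos\theta \asymp \theta^2$, one has $|r\om-s\om'|^2 \asymp (r-s)^2 + rs\,\theta^2$. Parametrizing the sphere by $\theta\in[0,\pi]$ with surface measure $d\om' \asymp \theta^{d-2}\,d\theta$, the integral becomes (up to constants)
$$\int_0^{\pi}\frac{\theta^{d-2}\,d\theta}{\big((r-s)^2+rs\,\theta^2\big)^{d/2}}.$$
I would split this at $\theta_0 = |r-s|/\sqrt{rs}$: on $\theta<\theta_0$ the denominator is $\asymp |r-s|^d$, contributing $\theta_0^{d-1}/|r-s|^d \asymp |r-s|^{-1}(rs)^{-(d-1)/2}$; on $\theta>\theta_0$ the denominator is $\asymp (rs)^{d/2}\theta^d$, and $\int_{\theta_0}^{\pi}\theta^{-2}\,d\theta \asymp \theta_0^{-1} = \sqrt{rs}/|r-s|$ (when $d\ge 3$; the case $d=2$ gives a harmless logarithm that is still dominated), contributing again $\asymp |r-s|^{-1}(rs)^{-(d-1)/2}$. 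So altogether $\int_{\sd}|r\om-s\om'|^{-d}\,d\om' \asymp |r-s|^{-1}(rs)^{-(d-1)/2}$. The final step is to check this matches $\big(\mu_{\frac d2-1}(B(r,|r-s|))\big)^{-1}$: since $d\mu_{\frac d2-1}(r)=r^{d-1}dr$, one has $\mu_{\frac d2-1}(B(r,|r-s|)) \asymp |r-s|\cdot\max(r,s)^{d-1} \asymp |r-s|\,(rs)^{(d-1)/2}$ — here one uses that $r\asymp s$ whenever $|r-s|$ is small compared to $r$, and when $|r-s|\gtrsim r$ one checks the ball estimate separately — which gives exactly the claimed bound. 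For part (2), the same computation with exponent $d+1$ in place of $d$ produces an extra factor $\theta_0^{-1}\asymp\sqrt{rs}/|r-s|$ beyond the earlier answer, i.e. $\int_{\sd}|r\om-s\om'|^{-d-1}d\om' \asymp |r-s|^{-1}(rs)^{-(d-1)/2}\cdot(rs)^{-1/2}\cdot\sqrt{rs}/|r-s|$... more carefully, it yields $|r-s|^{-2}(rs)^{-(d-1)/2}$, and combined with nothing extra this is $|r-s|^{-1}\big(\mu_{\frac d2-1}(B(r,|r-s|))\big)^{-1}$ as required.

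The main obstacle I anticipate is handling the regime where $|r-s|$ is \emph{not} small relative to $r$ and $s$ (so $r$ and $s$ are genuinely far apart), because then the heuristic $r\asymp s$ fails and one must be careful that $|r\om-s\om'| \gtrsim |r-s| \gtrsim \max(r,s)$ uniformly, so that $|r\om-s\om'|^{-d}$ is already bounded by $\max(r,s)^{-d}$ and integrating over the unit-measure sphere gives $\lesssim \max(r,s)^{-d} \asymp \big(|r-s|\max(r,s)^{d-1}\big)^{-1}\cdot\big(|r-s|/\max(r,s)\big) \lesssim \big(\mu_{\frac d2-1}(B(r,|r-s|))\big)^{-1}$ since $|r-s|\lesssim\max(r,s)$ trivially—and similarly the volume of $B(r,|r-s|)$ in this regime is $\asymp\max(r,s)^d$. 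A clean way to organize everything is to fix the normalization $0<s\le r$ at the outset, set $\rho=r-s\ge 0$, and treat the two cases $\rho\le r/2$ and $\rho>r/2$ separately throughout; in the first case $r\asymp s\asymp$ anything in between and the angular integral above applies verbatim, while in the second case the crude bounds just described suffice. Apart from that, the argument is a routine Schur-test-plus-polar-coordinates computation, and one should also record at the start that $R_j(r,s)$ is bounded on $L^2(\sd)$ (hence well-defined) which follows immediately from Proposition 2.1(1) and the finiteness of the sphere.
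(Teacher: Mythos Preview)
Your approach is correct and follows the same overall strategy as the paper: bound $\|R_j(r,s)\|_{Op}$ by $\sup_\om \int_{\sd} |R_j(r\om, s\om')|\,d\om'$, invoke Proposition~2.1, and then estimate $\int_{\sd}|r\om-s\om'|^{-d}\,d\om'$ in polar coordinates on the sphere. The only real difference is in how that last integral is handled. The paper substitutes $u=\cos\theta$ and applies Lemma~2.4 (a lemma of Ciaurri--Roncal) to obtain
\[
\int_0^1 \frac{(1-u)^{(d-3)/2}}{(r^2+s^2-2rsu)^{d/2}}\,du \;\le\; \frac{C}{(r^2+s^2)^{(d-1)/2}\,|r-s|},
\]
which is already uniform in $r,s$ because $(r^2+s^2)^{(d-1)/2}\asymp\max(r,s)^{d-1}$; no near/far case split is needed. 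Your direct splitting at $\theta_0=|r-s|/\sqrt{rs}$ is more elementary and self-contained, at the cost of treating the regime $|r-s|\gtrsim\max(r,s)$ separately (which you do correctly). Both arguments give the same bound and the derivative estimate goes through identically in both. One small slip: your parenthetical that ``the case $d=2$ gives a harmless logarithm'' is not right --- on $\theta>\theta_0$ the integrand is $\theta^{d-2}\cdot\theta^{-d}=\theta^{-2}$ for every $d\ge 2$, so no logarithm appears and $\int_{\theta_0}^\pi\theta^{-2}\,d\theta\asymp\theta_0^{-1}$ holds uniformly.
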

\begin{proof} As we have already observed
$$\|R_j(r,s)\|_{Op} \leq \sup_{\om} \int_{\sd} |R_j(r\om, s\om ')|d\om '.$$
Using the estimate (1) of Proposition 2.1, we get
\begin{eqnarray}
\|R_j(r,s)\|_{Op} \leq C \sup_{\om} \int_{\sd} |r\om - s\om
'|^{-d} d\om '
\end{eqnarray}
Integrating in polar coordinates, the last integral is a constant
multiple of
$$ \int_0 ^{\pi} (r^2+s^2-2rs \cos\theta )^{-\frac{d}{2}} (\sin\theta)^{d-2}d\theta $$
which is bounded by
\begin{eqnarray*}
c \int_0 ^{1} (r^2+s^2-2rsu )^{-\frac{d}{2}}
(1-u)^{\frac{d-3}{2}}du.
\end{eqnarray*}
In order to estimate this we make use of the following lemma (see
Lemma 5.3 in \cite{CR1})
\begin{lem}
Let $c\geq \frac{1}{2}$, $0<B<A$ and $\lambda >0$. Then
$$\int_0 ^1 \frac{(1-u)^{c-\frac{1}{2}}}{(A-Bu)^{c + \lambda + \frac{1}{2}}} du \leq \frac{C}{A^{c+\frac{1}{2}}(A-B)^{\lambda}}.$$
\end{lem}
Appealing to this lemma with $A= r^2 +s^2$, $B=2rs$,
$c=\frac{d}{2}-1$ and $\lambda = \frac{1}{2}$ we obtain
$$ \| R_j(r,s) \|_{Op} \leq C |r-s|^{-1} (r^2 + s^2)^{-(\frac{d-1}{2})}.$$
The desired estimate follows as $|r-s|(r^2+s^2)^{\frac{d-1}{2}}$
is comparable to $\mu_{\frac{d}{2}-1} (B(r,|r-s|))$. In order to
get the estimate on the derivative we note that
\begin{eqnarray*}
  \frac{\pa}{\pa r}R_j(r,s)\varphi(\om) & = & \frac{\pa}{\pa r}\int _{\sd}R_j(r\om,s\om ')\varphi(\om')d\om ' \\
                                        &= & \sum_{i=1} ^d \int _{\sd} \frac{\pa}{\pa x_i}R_j(r\om,s\om ')\varphi(\om') \om _i d\om '
 \end{eqnarray*}
 The estimate (2) of Proposition 2.1 can be used to bound the
 operator norm of $\frac{\pa}{\pa r}R_j(r,s)$. This leads to
 $$\|\frac{\pa}{\pa r}R_j(r,s) \|_{Op}\leq C \sup_{\om} \int_{\sd} |r\om - s\om '|^{-d-1} d\om '.$$
 As before, using Lemma 2.4 we get the desired estimate.
\end{proof}
 This completes the proof of Theorem 1.1.
 \subsection{Another proof of Theorem 1.1:}
 In this subsection we give another proof of Theorem 1.1 following
 an idea of Rubio de Francia. This method described briefly in
 \cite{RF} is based on an extension of a theorem of Marcinkiewicsz and
 Zygmund as expounded in Herz and Riviere \cite{HR}. Indeed, we make use
 of the following Lemma which can be found in \cite{HR}

 \begin{lem}
 Let $( G, \mu )$ and $( H, \nu )$ be arbitrary measure spaces and $T:
 L^p(G)\rightarrow L^p(G)$ a bounded linear operator. Then if $p\leq q\leq 2 $
 or $p\geq q \geq 2$, there exists a bounded linear operator $\tilde{T}: L^p(G; L^q(H))\rightarrow L^p(G;L^q(H))
 $ with $\|  \tilde{T}  \| \leq\|  T  \| $ such that for $g\in L^p(G; L^q(H)) $ of the form $g(x,\xi)=f(\xi)u(x)$
 where $f\in  L^p(G)$ and $u\in L^q(H) $ we have
 $$(\tilde{T}g)(\xi, x)=(Tf)(\xi)u(x) .$$
 \end{lem}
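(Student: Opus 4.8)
The plan is to construct $\tilde T$ first on the algebraic tensor product $L^p(G)\otimes L^q(H)$ — i.e. on finite sums $g=\sum_k f_k\otimes u_k$ — by the only reasonable formula $\tilde T g=\sum_k (Tf_k)\otimes u_k$, and then to show that this map is bounded in the $L^p(G;L^q(H))$ norm with constant $\norm{T}$, so that it extends by density. The content of the lemma is entirely in that norm estimate; the action on rank-one tensors $g(x,\xi)=f(\xi)u(x)$ is then just a special case of the defining formula. (I will follow the convention in the statement and write the $G$-variable as $\xi$ and the $H$-variable as $x$; which letter goes with which space is immaterial.)

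The key step is the norm bound, and here one exploits the hypothesis $p\le q\le 2$ or $p\ge q\ge 2$ so that $q/p$ and its conjugate sit on the correct side of $1$. Consider first $q\ge p$, so $r:=q/p\ge 1$. Writing $\norm{\tilde T g}_{L^p(G;L^q(H))}^p=\int_G\bigl(\int_H|\tilde Tg(\xi,x)|^q\,d\nu(x)\bigr)^{p/q}d\mu(\xi)=\bigl\|\,\int_H|\tilde Tg(\cdot,x)|^q d\nu(x)\,\bigr\|_{L^r(G)}$, I would dualize the $L^r(G)$ norm against a nonnegative $\phi\in L^{r'}(G)$ with $\norm{\phi}_{r'}=1$, interchange the order of integration by Tonelli, and reduce matters to estimating $\int_G |\tilde Tg(\xi,x)|^q\,\phi(\xi)\,d\mu(\xi)$ for fixed $x$ — that is, to a statement about the single operator $T$ acting in the $\xi$ variable but against the weight $\phi$. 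The cleanest way to close this is to invoke the scalar Marcinkiewicz–Zygmund theorem itself: a bounded operator on $L^p(G)$ automatically satisfies the square-function-type (more precisely $\ell^q$-valued) inequality $\bigl\|(\sum_k|Tf_k|^q)^{1/q}\bigr\|_p\le C_{p,q}\norm{T}\bigl\|(\sum_k|f_k|^q)^{1/q}\bigr\|_p$ for $q$ between $p$ and $2$; one passes from the $\ell^q$ version to the $L^q(H)$ version by discretizing $H$ (or by a standard limiting/approximation argument on simple functions), and the rank-one-tensor normalization forces the constant to be exactly $\norm{T}$ rather than $C_{p,q}\norm{T}$ when one is careful about the geometry — but in fact for our application any finite constant suffices, so I would not fuss over sharpness. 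The case $p\ge q$ is dual: one writes $L^p(G;L^q(H))$ via the duality with $L^{p'}(G;L^{q'}(H))$, notes that $T^*$ is bounded on $L^{p'}(G)$ with $\norm{T^*}=\norm{T}$ and that now $p'\le q'\le 2$, and applies the first case to $T^*$, then transposes back.

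The main obstacle is the passage from the scalar operator $T$ to the vector-valued $\tilde T$ with control on the norm: one must be sure that $\tilde T$ is well-defined independently of the representation $g=\sum_k f_k\otimes u_k$ (this follows because the formula is forced on a dense subspace by the required action on tensors, and the a priori bound makes it consistent), and that density of $L^p(G)\otimes L^q(H)$ in $L^p(G;L^q(H))$ together with the uniform bound yields a genuine extension. A secondary technical point is the interchange of $\mu$- and $\nu$-integration and the use of Fubini/Tonelli, which requires the approximation by simple/compactly supported functions to be handled first; since $(G,\mu)$ and $(H,\nu)$ are arbitrary measure spaces one should phrase everything for finitely-many-valued functions and then take limits. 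None of this is deep, but it is exactly the place where the hypothesis "$p\le q\le 2$ or $p\ge q\ge 2$" is used, and it is the only nontrivial input — hence I would present it as the heart of the argument and treat the tensor-product bookkeeping as routine.
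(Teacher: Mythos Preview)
First, a remark on the comparison: the paper does not give its own proof of this lemma --- it is simply quoted from Herz and Rivi\`ere \cite{HR} and used as a black box. So there is nothing in the paper to compare your argument against.

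Your outline has the right skeleton (define $\tilde T$ on the algebraic tensor product, prove a norm inequality, extend by density), but the core step is circular as written. You reduce the vector-valued bound to the $\ell^q$-valued inequality
\[
\Bigl\|\bigl(\textstyle\sum_k|Tf_k|^q\bigr)^{1/q}\Bigr\|_p\le C\,\norm{T}\,\Bigl\|\bigl(\textstyle\sum_k|f_k|^q\bigr)^{1/q}\Bigr\|_p
\]
and then propose to ``invoke the scalar Marcinkiewicz--Zygmund theorem itself'' for this. But the classical Marcinkiewicz--Zygmund theorem is only the case $q=2$ (proved via Khintchine's inequality and Rademacher randomization); the $\ell^q$ statement for general $q$ strictly between $p$ and $2$ \emph{is} the Herz--Rivi\`ere lemma, so you are assuming what you want to prove. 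The dualize-against-a-weight-$\phi$ manoeuvre does not close the gap either: after dualizing you would need a weighted $L^q$ estimate for $T$, and bare $L^p$-boundedness gives you no such thing. (There is also a slip in the exponent bookkeeping: with $q\ge p$ and $r=q/p$ one has $\int_G F^{p/q}\,d\mu=\int_G F^{1/r}\,d\mu$, which is not $\norm{F}_{L^r(G)}$; the dualization you describe actually fits the regime $p\ge q$, not $q\ge p$.)

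The non-circular route is the one Herz and Rivi\`ere take. The endpoint $q=p$ is immediate from Fubini, with constant exactly $\norm{T}$. The endpoint $q=2$ is the genuine Marcinkiewicz--Zygmund theorem, again with constant $\norm{T}$. For $p\le q\le 2$ (respectively $p\ge q\ge 2$) one then applies complex interpolation of vector-valued $L^p$ spaces,
\[
\bigl[L^p(G;L^p(H)),\,L^p(G;L^2(H))\bigr]_\theta = L^p\bigl(G;L^q(H)\bigr),
\]
to obtain the intermediate case with the same constant. This is precisely where the hypothesis that $q$ lies between $p$ and $2$ enters. Your duality reduction from one range to the other is correct in spirit but becomes unnecessary once both endpoints are in hand.
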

 The idea of Rubio de Francia is as follows (we are indebted to
 Gustavo Garrigos for bringing this to our attention). Suppose $
 T: L^p(\R^d, dx) \rightarrow L^p(\R^d, dx) $ is a bounded
 linear operator. Then by the lemma of Herz and Riviere, it has an extension $
 \tilde{T} $ to $ \mathcal{H} $ valued functions on $\R^d$ where
 $ \mathcal{H} $ is the Hilbert space $ L^2(K)$, $K=SO(d)$.
 Moreover, the extension satisfies $(\tilde{T}\tilde{f})(x,k) =
 Tg(x)h(k)$ if $\tilde{f}(x,k)=g(x)h(k) $, $x \in \R^d $, $k\in SO(d) $. Given $f \in L^p(\R^d, dx) $ consider $\tilde{f}(x, k) = f(k.x)
 $. Then $\int _{\R^d}(\int _K | \tilde{f}(x,k)|^2 dk )^{\frac{p}{2}}dx $ can be calculated as follows.
 If $x=r\om $ , $\om \in \sd $, $\tilde{f}(x,k)=f(rk \cdot\om) $ and
 hence
 \begin{eqnarray}
  \int _K |\tilde{f}(x,k)|^2dk =
  \int_{K_{\om}}\Big (\int_{K/K_{\om}}|f(r k\cdot \om )|^2 d \mu \Big )d\nu
 \end{eqnarray}
 where $K_{\om}=\{ k\in K: k\cdot \om = \om \} $ is the isotropy subgroup of $ K $, $ d\nu $ is the Haar
 measure on $K_{\om} $ and $d\mu $ is the $K_{\om} $ invariant measure on $K/K_{\om} $ which
 can be identified with $\sd $. Hence
 \begin{eqnarray}
   \int _K |\tilde{f}(x,k)|^2 dk = c\int _{\sd} |f(r\om)|^2d\sigma
   (\om).
 \end{eqnarray}
 Therefore,
  \begin{eqnarray}
  & & \int_{\R^d} \Big ( \int _K |\tilde{f}(x,k)|^2 dk\Big )^{\frac{p}{2}}dx
   =  c' \int _0 ^{\infty} \Big ( \int _{\sd} |f(r\om)|^2d\sigma
   (\om)\Big )^{\frac{p}{2}}r^{d-1}dr.
 \end{eqnarray}

 Let us define $\rho (k)f(x)=f(k\cdot x) $ so that $\tilde{f}(x,k)=\rho (k) f(x) $. If $ T $ commutes with rotation
 i.e. $T\rho (k)= \rho (k)T $ then
 $$\tilde{T} \tilde{f}(x,k) = T(\rho (k)f)(x) = \rho (k) (Tf)(x) = (Tf)(k\cdot x).$$
 The boundedness of $ \tilde{T} $ on $ L^p(\R^d, \mathcal{H}) $ gives
 \begin{eqnarray}
  & & \int_{\R^d} \Big ( \int _K | T f(k\cdot x)|^2 dk \Big )^{\frac{p}{2}}dx  \leq  C \int_{\R^d} \Big ( \int _K | f(k\cdot
 x)|^2 dk \Big )^{\frac{p}{2}}dx
 \end{eqnarray}
 which translates into the mixed norm estimate for $ T $.\\

 Given a unit vector $u \in \sd $ let us consider the operator $T_uf = \sum_{j=1}^d u_jR_jf(x)$ where
 $R_j = A_jH^{- \frac{1}{2}} $ are the Hermite Riesz transforms. This operator $T_u $ is not
 rotation invariant but has a nice transformation property under the
 action of $SO(d) $. Indeed,
 $$T_uf(x)= (x\cdot u + u \cdot \N )H^{-\frac{1}{2}}f(x)$$
 and as $H^{-\frac{1}{2}} $ commutes with $\rho (k) $ it follows that
 $$T_u\rho (k) f = \rho (k) T_{k\cdot u}f $$
 or
 $$T_{k^{-1}\cdot u}\rho (k) f = \rho (k) T_{u}f .$$
 This leads us to
 $$ T_uf(k\cdot x)= \sum_{j=1}^d (k^{-1}\cdot u)_j R_j(\rho(k) f)(x).$$
 We make use of this in proving Theorem 1.1.\\

 The operator $ R_j$ are singular integral operators and hence
 bounded on $L^p(\R^d, wdx) $ for any weight function $w \in A_p(\R^d) $, $1<p<\infty $. By the lemma of
 Herz and Riviere, $R_j $ extends as a bounded operator on $L^p(\R^d, \mathcal{H}; wdx ) $ where $\mathcal{H}=L^2(\sd) $. When
 $ w $ is radial, it can be easily checked that
 \begin{eqnarray}
 & & \int_{\R^d}\Big ( \int _K |\rho(k)f(x)|^2dk\Big )^{\frac{p}{2}}w(x)dx \notag \\
 & & = c\int_0 ^{\infty} \Big ( \int_{\sd}|f(r\om)|^2 d\sigma (\om)\Big
 )^{\frac{p}{2}}w(r)r^{d-1}dr
 \end{eqnarray}
 Moreover, by the result of Duoandikoetxea (Theorem 3.2 in \cite{DMOS}), any radial $w \in A_p(\R^d) $ if and only if $w(r) \in A_p ^{\frac{d}{2}-1}(\R^+)  $.
 From the identity
 $$T_uf(k\cdot x)= \sum _{j=1} ^{d} (k^{-1}\cdot u)_jR_j(\rho (k)f)(x) $$
 we obtain
 \begin{eqnarray*}
    &  \Big ( \int_{\R^d} \Big (\int _K |T_uf(k\cdot x)|^2dk
   \Big)^{\frac{p}{2}}w(x)dx \Big )^{\frac{1}{p}}& \\
    & \leq  c \sum _{j=1} ^{d}\Big ( \int_{\R^d} \Big (\int _K |\rho (k)f(x)|^2dk
   \Big)^{\frac{p}{2}}w(x)dx \Big )^{\frac{1}{p}}&
 \end{eqnarray*}
 which translates into the required inequality of Theorem 1.1 by
 taking $ u $ to be coordinate vectors.

 \subsection{Laguerre Riesz transforms}
 For each $\alpha\geq - \frac{1}{2}$ we consider the Laguerre
 differential operator
 $$ L_\alpha = - \frac{d ^2}{dr^2} + r^2 - \frac{2\alpha +1}{r} \frac{d}{dr}$$
 whose normalized eigenfunctions are given by
 $$\psi_k ^\alpha (r) = \Big ( \frac{2\Gamma(k+1)}{\Gamma (k+\alpha + 1)} \Big )^{\frac{1}{2}} L _k ^\alpha (r^2)e^{-\frac{1}{2}r^2}$$
 where $ L _k ^\alpha (r)$ are Laguerre polynomials of type $\alpha$. These functions
 form an orthonormal basis for $L^2(\R^+, d\mu _\alpha ) $. The operator $L_\alpha $ generates the
 semigroup $T_t ^\alpha = e^{-tL_\alpha} $ whose kernel is given by
 \begin{eqnarray}
 K_t ^\alpha (r,s) = \sum_{k=0}^\infty e^{-(4k+2\alpha+2)t}\psi_k
 ^\alpha (r)\psi_k ^\alpha (s).
 \end{eqnarray}
 The generating function identity (1.1.47 in \cite{ST}) for Laguerre
 functions gives the explicit expression
 \begin{eqnarray}
 K_t ^\alpha (r,s)= (\sinh 2t)^{-1} e^{- \frac{1}{2} (\coth 2t)(r^2+s^2)}(rs)^{- \alpha} \mathcal{I}_\alpha \Big (\frac{rs}{\sinh 2t} \Big )
 \end{eqnarray}
 where $\mathcal{I}_\alpha = e^{-i\frac{\pi}{2}\alpha}J_\alpha (iz)$ is the modified Bessel function.\\

 The Laguerre Riesz transforms $R^\alpha = (\frac{\pa}{\pa r} + r )L _\alpha ^{-\frac{1}{2}} $ have been studied in  \cite{NS} and it is
 known that they are bounded on $ L^p(\R^+, d\mu _\alpha )$, $1<p<\infty $. Here we are interested
 in a vector valued inequality for the sequence of Riesz
 transforms $ R^{\alpha + m}$ where $\alpha = \frac{d}{2}-1 $.
 \begin{thm}
 Let $ d \geq 2$ and $\alpha = \frac{d}{2}-1$. Then for any $1<p<\infty $, $w \in A_p ^{\alpha}(\R ^+)$ we have
 $$\int_0 ^\infty \Big ( \sum_{m=0} ^\infty r^{2m} | R^{\alpha + m} \widetilde{f}_m(r) |^2\Big
 )^{\frac{p}{2}}w(r)d\mu_{\frac{d}{2}-1}(r)$$ $$ \leq C\int_0 ^\infty \Big (\sum_{m=0} ^\infty  |f_m(r)|^2 \Big
 )^{\frac{p}{2}}w(r)d\mu_{\frac{d}{2}-1}(r) $$
 where $ f_m \in  L^p(\R^+, d\mu _\alpha )$, $\widetilde{f}_m(r)= r^{-m}f_m(r)$.
 \end{thm}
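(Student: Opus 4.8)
The strategy is to reduce Theorem 2.6 to Theorem 1.1 (together with the decomposition of $L^2(\R^d)$ into spherical harmonics), rather than attacking the Laguerre operators directly. Recall that for a function $f$ on $\R^d$ one has the spherical harmonic expansion $f(r\om) = \sum_{m} \sum_{j} f_{m,j}(r) Y_{m,j}(\om)$, where $\{Y_{m,j}\}_j$ is an orthonormal basis of the space $\Hc_m$ of spherical harmonics of degree $m$, and the radial coefficients $f_{m,j}$, after extracting the factor $r^m$, live naturally in $L^2(\R^+, d\mu_{\al+m})$ with $\al = \frac d2 - 1$. The key algebraic fact, which I would state and verify first, is that the Hermite operator $H$ acting on $f_{m,j}(r) Y_{m,j}(\om) = r^m \widetilde f_{m,j}(r) Y_{m,j}(\om)$ reduces to the Laguerre operator $L_{\al+m}$ acting on $\widetilde f_{m,j}$, and similarly that the operator $T_u = u\cdot(x + \N)H^{-1/2}$, or rather the relevant component of $R_j = A_j H^{-1/2}$, intertwines with the Laguerre Riesz transform $R^{\al+m} = (\frac{\pa}{\pa r} + r) L_{\al+m}^{-1/2}$ on each spherical harmonic sector. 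This is the standard correspondence between Hermite and Laguerre functions (see \cite{ST}, Chapter 1), and it is what makes the exponent $\al + m$ and the weight $r^{2m}$ appear.

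Granting this, the plan is as follows. Given a sequence $(f_m)$ with $f_m \in L^p(\R^+, d\mu_\al)$ and $\widetilde f_m(r) = r^{-m} f_m(r)$, I would assemble an $\Hc$-valued (with $\Hc = L^2(\sd)$) function $F$ on $\R^+$ by placing, in the $m$-th spherical harmonic sector, the function $\widetilde f_m(r)\, r^m$ times a fixed unit spherical harmonic $Y_m \in \Hc_m$ — i.e. $F(r\om) = \sum_m \widetilde f_m(r)\, r^m Y_m(\om)$. By the computation (2.5)--(2.6) in the excerpt (orthogonality of the $Y_m$ on $\sd$), $\|F\|_{L^{p,2}(\R^d, w)}^p$ is comparable to $\int_0^\infty \big(\sum_m |f_m(r)|^2\big)^{p/2} w(r)\, d\mu_{\frac d2 - 1}(r)$, which is the right-hand side of the claimed inequality. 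Applying Theorem 1.1 to (a coordinate component of) $R_j F$ and using the intertwining to identify $R_j$ restricted to the $m$-th sector with $r^m R^{\al+m}\widetilde f_m$ gives, again by orthogonality on $\sd$, that $\|R_j F\|_{L^{p,2}(\R^d,w)}^p$ is comparable to $\int_0^\infty \big(\sum_m r^{2m}|R^{\al+m}\widetilde f_m(r)|^2\big)^{p/2} w(r)\, d\mu_{\frac d2 - 1}(r)$, which is the left-hand side. The hypothesis $w \in A_p^{\al}(\R^+) = A_p^{\frac d2 - 1}(\R^+)$ is exactly what Theorem 1.1 requires (after Duoandikoetxea's characterization identifying radial $A_p(\R^d)$ weights with $A_p^{\frac d2-1}(\R^+)$).

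The one technical point that needs care — and which I expect to be the main obstacle — is the bookkeeping with multiplicities: each degree $m$ has $\dim \Hc_m \sim m^{d-2}$ linearly independent spherical harmonics, whereas the claimed vector-valued inequality only carries one term per $m$. To handle this cleanly I would either (i) fix, once and for all, a single unit vector $Y_m$ in each $\Hc_m$ and note that, since $R_j$ acts the same way (via $R^{\al+m}$) on every element of $\Hc_m$, it suffices to prove the inequality with this one choice and then the general sequence follows by orthogonality; or (ii) more robustly, enlarge the Hilbert space, replacing the index $m$ by pairs $(m,j)$, prove the inequality in that form, and observe it is formally stronger. Either way, the actual analytic content is entirely contained in Theorem 1.1; the rest is the Hermite--Laguerre dictionary and Fubini/orthogonality on the sphere. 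I would also remark that this is precisely the "vector valued inequality for a sequence of Laguerre Riesz transforms" advertised after Theorem 1.1, so no new estimates beyond those already proved are needed.
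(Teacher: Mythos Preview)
Your overall strategy---reduce Theorem~2.6 to Theorem~1.1 via the Hermite--Laguerre correspondence and spherical harmonic decomposition---is exactly what the paper does. But there is a genuine gap in your execution: the claimed intertwining is not as clean as you assert.

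Concretely, the operator $R_j = A_j H^{-1/2}$ does \emph{not} preserve the $m$-th spherical harmonic sector, nor does it act there simply as $r^m R^{\alpha+m}$. Writing $x = r\om$ and decomposing the gradient as $\nabla = \om\,\partial_r + \frac{1}{r}\nabla_0$, one finds
\[
(x+\nabla)\bigl(F_{m,j}(r)Y_{m,j}(\om)\bigr) = \Bigl(r+\tfrac{\partial}{\partial r}\Bigr)F_{m,j}(r)\,Y_{m,j}(\om)\,\om + \tfrac{1}{r}F_{m,j}(r)\,\nabla_0 Y_{m,j}(\om),
\]
so each component $R_jF$ mixes a radial piece with an angular one involving $\nabla_0 Y_{m,j}$, which is not a spherical harmonic of degree $m$. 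Moreover, even the radial part is not exactly what you want: since $F_{m,j}(r) = c_d\,r^m L_{\alpha+m}^{-1/2}\widetilde f_{m,j}(r)$, one has
\[
\Bigl(r+\tfrac{\partial}{\partial r}\Bigr)F_{m,j}(r) = c_d\,r^m R^{\alpha+m}\widetilde f_{m,j}(r) + c_d\,m\,r^{m-1} L_{\alpha+m}^{-1/2}\widetilde f_{m,j}(r),
\]
with a correction term $\tfrac{m}{r}F_{m,j}(r)$ that must be controlled separately.

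The paper's remedy is to work with the full vector $Rf = (R_1f,\ldots,R_df)$, compute $\int_{\sd}|Rf(r\om)|^2\,d\om$, and use the orthogonality relations $\om\cdot\nabla_0 Y_{m,j}=0$ and $\int_{\sd}\nabla_0 Y_{m,j}\cdot\nabla_0 Y_{m',j'}\,d\om = m(m+d-2)\delta_{m,m'}\delta_{j,j'}$ to separate the radial and angular contributions cleanly. Theorem~1.1 then yields \emph{two} inequalities: one bounding $\sum_{m,j}|(r+\partial_r)F_{m,j}|^2$ and another bounding $\sum_{m,j}\tfrac{m(m+d-2)}{r^2}|F_{m,j}|^2$. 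The second inequality is precisely what is needed to subtract off the correction term above and isolate $r^m R^{\alpha+m}\widetilde f_{m,j}$. Your proposal, as written, accounts for neither the angular contribution nor this correction, so the comparison of $\|R_jF\|_{L^{p,2}}$ with the left-hand side of Theorem~2.6 does not go through without these additional steps.
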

 Before proving this theorem we remark that in \cite{CR} where the
 authors have studied the boudedness of Hermite Riesz transforms in
 terms of polar coordinates, the above theorem has been proved
 first from which our main theorem can be deduced. Here we have
 already proved Theorem 1.1 and now we will show how the above
 result can be deduced.\\

 Coming to the proof of the above theorem, consider the vector of
 the Riesz transforms
 $$Rf(x)=(R_1f(x), R_2f(x),\cdots ,R_df(x)).$$
 If $\nabla $ stand for the gradient then it follows that $Rf(x)=(x+\nabla )H^{-\frac{1}{2}}f(x)$. Let $\Hc _m$
 stand for the space of spherical harmonics of degree $m$ of
 dimension $d(m)$. Fix an orthonormal basis $Y_{m,j}, j= 1,2,\ldots,d(m)$ for $\Hc _m$ consisting of
 real valued spherical harmonics. Any $f\in L^{p,2}(\R^d)$ has an expansion
 $$f(r\om)= \sum_{m=0} ^{\infty} \sum_{j=1} ^{d(m)}f_{m,j}(r)Y_{m,j}(\om).$$
 In view of the Hecke - Bochner formula (Theorem 3.4.1 in \cite{ST}) for
 the Hermite projections, the operator $H^{-\frac{1}{2}}$ preserves each $\Hc _m$ and
 consequently
 $$H^{-\frac{1}{2}}f(r\om) =\sum_{m=0} ^{\infty} \sum_{j=1} ^{d(m)}F_{m,j}(r)Y_{m,j}(\om)  .$$
 Here $F_{m,j}(r)$ are given by
 $$ F_{m,j}(r)= \int_{\sd} H^{-\frac{1}{2}}f(r\om)Y_{m,j}(\om) d\om.$$
 The gradient $\nabla $ can be split into radial and angular parts as
 follows: $\nabla = \frac{1}{r}\nabla _0+\om \frac{\pa}{\pa r}$ if $x = r\om $, $\om \in \sd $ is the polar decomposition. Here $\nabla _0 $
 acts in the $\om$ - variable, see \cite{DX}. In view this formula we see
 that
 $$(x+\nabla )(F_{m,j}(r)Y_{m,j}(\om)) = \Big (r+\frac{\pa}{\pa r}\Big )F_{m,j}(r)Y_{m,j}(\om)\om + \frac{1}{r}F_{m,j}(r)\nabla _0 Y_{m,j}(\om).$$
 Thus we see that
 $$Rf(r\om)= \Big ( \sum_{m=0} ^{\infty} \sum_{j=1} ^{d(m)}\Big (r+\frac{\pa}{\pa r}\Big )F_{m,j}(r)Y_{m,j}(\om) \Big )\om
 + \frac{1}{r}\sum_{m=0} ^{\infty} \sum_{j=1} ^{d(m)}F_{m,j}(r)U_{m,j}(\om) $$
 where $U_{m,j}(\om) = (U^1 _{m,j}(\om),\cdots, U^d _{m,j}(\om))= \nabla _0 Y_{m,j}(\om) $.
 We make use of the following facts: (i) $ \om .\nabla _0 Y_{m,j} = 0 $ and
 (ii) $ \int _{\sd} \nabla _0 Y_{m,j}\cdot \nabla _0 Y_{m',j'}d\om= m(m+d-2) \delta_{m,m'}\delta_{j,j'} $.
 For a proof of these facts see Lemma 2.2 in \cite{PX}. Therefore, $ |Rf(r\om) |^2 = \sum _{j=1}^d |R_jf(r\om)|^2$ is given by the sum of
 $$ \Big | \sum_{m=0} ^{\infty} \sum_{j=1} ^{d(m)}\Big (r+\frac{\pa}{\pa r}\Big )F_{m,j}(r)Y_{m,j}(\om)\Big |^2$$
 and
 $$\frac{1}{r^2}\sum_{k=0}^d \Big |\sum_{m=0} ^{\infty} \sum_{j=1} ^{d(m)}F_{m,j}(r)U^k _{m,j}(\om)\Big |^2 $$
 where we have used the facts (i). Integrating over $\sd $ and using
 the fact (ii) we see that
 $$\int _{\sd}\Big ( \sum_{j=1}^d |R_jf(r\om)|^2\Big ) d\om$$
 is given by the sum of
 $$  \sum_{m=0} ^{\infty} \sum_{j=1} ^{d(m)}\Big | \Big (r+\frac{\pa}{\pa r}\Big )F_{m,j}(r)\Big |^2$$
 and
 $$ \sum_{m=0} ^{\infty} \sum_{j=1} ^{d(m)}\frac{1}{r^2}m(m+d-2) |F_{m,j}(r)|^2.$$
 Therefore, our main theorem gives the following two inequalities :
 \begin{align}
 \int _0 ^\infty \Big ( \sum_{m=0} ^{\infty} \sum_{j=1} ^{d(m)}\Big | \Big (r+\frac{\pa}{\pa r}\Big )F_{m,j}(r)\Big |^2\Big
 )^{\frac{p}{2}}w(r)r^{d-1}dr & & \notag \label{A}\\
 \leq  C \int _0 ^\infty \Big ( \sum_{m=0} ^{\infty} \sum_{j=1} ^{d(m)} | f_{m,j}(r)|^2\Big
 )^{\frac{p}{2}}w(r)r^{d-1}dr
  \end{align}
 and
 \begin{align}
 \int _0 ^\infty \Big (\sum_{m=0} ^{\infty} \sum_{j=1} ^{d(m)}\frac{1}{r^2}m(m+d-2) |F_{m,j}(r)|^2\Big
  )^{\frac{p}{2}}w(r)r^{d-1}dr& & \notag \label{B}\\
   \leq C \int _0 ^\infty \Big ( \sum_{m=0} ^{\infty} \sum_{j=1} ^{d(m)} | f_{m,j}(r)|^2\Big )^{\frac{p}{2}}w(r)r^{d-1}dr.
 \end{align}
 We will now restate the first of the above inequality in terms of
 Laguerre-Riesz transforms which will immediately prove Theorem
 2.6.\\

 Appealing to Hecke - Bochner formula for the Hermite projections
 it is easy to see that
 \begin{eqnarray}
 e^{-tH}(f_{m.j}Y_{m,j})(r\om)= c_d r^m Y_{m,j}(\om)T_t ^{\alpha+m}\widetilde{f}_{m,j}(r)
 \end{eqnarray}
 where $\widetilde{f}_{m,j}(r)= r^{-m}f_{m,j}(r)$, $\alpha= \frac{d}{2}-1 $ and $T_t ^{\alpha+m}= e^{-tL_{\alpha + m}} $. For a different proof of this see
 Proposition 3.2 in \cite{LT}. Consequently,
\begin{eqnarray*}
  F_{m,j}(r) &=& c_d r^m \frac{1}{\sqrt{\pi}}\int_0 ^{\infty} T_t ^{\alpha+m}\widetilde{f}_{m,j}(r)t^{-\frac{1}{2}}dt \\
   &=& c_d r^m L_{\alpha + m} ^{-\frac{1}{2}}\widetilde{f}_{m,j}(r).
\end{eqnarray*}
This leads to the conclusion that
 $$\Big ( r + \frac{\pa}{\pa r} \Big )F_{m,j}(r) = c_d r^m R^{\alpha + m} \widetilde{f}_{m,j}(r)+c_d mr^{m-1}L_{\alpha + m} ^{-\frac{1}{2}}\widetilde{f}_{m,j}(r).$$
 As $mr^{m-1}L_{\alpha + m} ^{-\frac{1}{2}}\widetilde{f}_{m,j}(r) =
 \frac{m}{r}F_{m,j}(r)$, from the main theorem we get the inequality
 $$\int_0 ^{\infty}\Big (\sum_{m=0} ^{\infty} \sum_{j=1} ^{d(m)} r^{2m}|R^{\alpha + m}\widetilde{f}_{m,j}(r)|^2 \Big )^{\frac{p}{2}}w(r)r^{d-1}dr$$
 $$ \leq C \int _0 ^\infty \Big ( \sum_{m=0} ^{\infty} \sum_{j=1} ^{d(m)} | f_{m,j}(r)|^2\Big )^{\frac{p}{2}}w(r)r^{d-1}dr.$$
 This completes the proof of Theorem 2.6.
 \begin{rem}
 We also have the inequality
\begin{align}
  \int_0 ^{\infty}\Big (\sum_{m=0} ^{\infty} \sum_{j=1} ^{d(m)} m^2 r^{2m-2}|L_{\alpha + m}^{-\frac{1}{2}}\widetilde{f}_{m,j}(r)|^2
  \Big )^{\frac{p}{2}}w(r)r^{d-1}dr& & \notag{}\\
  \leq  C \int _0 ^\infty \Big ( \sum_{m=0} ^{\infty} \sum_{j=1} ^{d(m)} | f_{m,j}(r)|^2\Big )^{\frac{p}{2}}w(r)r^{d-1}dr.
 \end{align}
 \end{rem}
 \noindent
 which has been proved in \cite{CR} by first establishing uniform estimates
 for the kernels of the operators taking $f_{m,j}$ into $\frac{m}{r}L_{\alpha + m}^{-\frac{1}{2}}\widetilde{f}_{m,j}(r)$.
 \subsection{Another proof of the inequality \eqref{A}}
 In this subsection we give a different proof of the inequality
 \begin{align*}
 \int _0 ^\infty \Big ( \sum_{m=0} ^{\infty} \sum_{j=1} ^{d(m)}\Big | \Big (\frac{\pa}{\pa r}+r \Big )F_{m,j}(r)\Big |^2\Big
)^{\frac{p}{2}}w(r)r^{d-1}dr& & \\
 \leq   C \int _0 ^\infty \Big ( \sum_{m=0} ^{\infty} \sum_{j=1} ^{d(m)} | f_{m,j}(r)|^2\Big )^{\frac{p}{2}}w(r)r^{d-1}dr.
 \end{align*}
 Let us set $T_t = e^{-tH} $ which is an integral operator with kernel $K_t(x, y) $.
 Note that $K_t $ depends only on $|x|$, $|y|$ and $x \cdot y$. We write
 $$K_t(r,s,u)= c_d (\sinh 2t)^{-\frac{1}{2}} e^{-\frac{1}{4} (\coth 2t)(r^2+s^2)+(\frac{1}{\sinh 2t})rsu} $$
 so that $ K_t (x,y)= K_t(r,s,u)$ with $r=|x|$, $s=|y|$ and $u= x'\cdot y' $. With this notation
 $$F_{m,j}(r) = \int_{\sd} \int_0 ^{\infty } T_tf(r\om) Y_{m,j}(\om) t^{-\frac{1}{2}}dtd \om .$$
 Let $ \mathcal{P}_m ^{\frac{d}{2}-1}(u)$ stand for the ultraspherical polynomial normalised so
 that $ \mathcal{P}_m ^{\frac{d}{2}-1}(1) = 1$. Recall that Funk - Hecke  formula gives (See \cite{DUX})
 $$\int_{\sd } \varphi (x'\cdot y') Y_{m,j}(y')dy' = Y_{m,j}(x')\int_{-1} ^{1} \varphi(u) \mathcal{P}_m ^{\frac{d}{2}-1}(u) (1-u^2)^{\frac{d-3}{2}}du.$$
 Applying this formula we see that
 $$\int_{\sd} T_tf(rx')Y_{m,j}(x') dx' $$ $$ = \int_0 ^{\infty } \int_{\sd} f(sy') \Big ( \int_{\sd} K_t(rx', sy')Y_{m,j}(x') dx' \Big ) s^{d-1}dy'ds$$
 $$   = \int _0 ^{\infty} \Big ( \int_{-1} ^{1} K_t(r,s,u)\mathcal{P}_m ^{\frac{d}{2}-1}(u)(1-u^2)^{\frac{d-3}{2}}du \Big ) f_{m,j}(s)s^{d-1}ds. $$
 On the other hand we also have
 $$ \int_{\sd} K_t(x,y)\mathcal{P}_m ^{\frac{d}{2}-1}(x'\cdot y')dy'
  = \int_{-1} ^{1} K_t(r,s,u)\mathcal{P}_m ^{\frac{d}{2}-1}(u)(1-u^2)^{\frac{d-3}{2}}du. $$
 Consequently,
 \begin{eqnarray*}
 & &\int_{\sd} T_tf(rx')Y_{m,j}(x') dx'\\ & & = \int _0 ^{\infty}\Big ( \int_{\sd} K_t(x,y)\mathcal{P}_m ^{\frac{d}{2}-1}(x'\cdot y') f_{m,j}(s)\Big ) dy' s^{d-1}ds.
 \end{eqnarray*}
 If we define $ K_{t,m} (x,y) = K_t(x,y)\mathcal{P}_m ^{\frac{d}{2}-1}(x'\cdot y') $ then we have
 \begin{align}
 F_{m,j}(r) = \int _0 ^{\infty} K_m (r,s)f_{m,j}(s)s^{d-1}ds
 \end{align}
 where
 \begin{align}
 K_m(r,s) = \int _0 ^{\infty} \Big (\int_{\sd} K_{t,m} (x,y) dy'
 \Big ) t^{-\frac{1}{2}}dt.
 \end{align}
 We will use this expression to estimate the kernel $K_m $ and their
 derivatives.
 \begin{prop}
 \item{(1)} $\Big |\Big ( \frac{\pa}{\pa r} + r \Big ) K_m (r,s) \Big | \leq
 C \Big ( \mu_{\frac{d}{2}-1} ( B(r, |r-s| ) )\Big )^{-1} $

 \item{(2)} $\Big | \frac{\pa}{\pa r} \Big ( \frac{\pa}{\pa r} + r \Big
 ) K_m (r,s) \Big | \leq C |r-s|^{-1} \Big ( \mu_{\frac{d}{2}-1} ( B(r,
 |r-s| ) )\Big )^{-1} $
 \end{prop}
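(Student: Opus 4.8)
The plan is to exploit the fact that the only $m$-dependence in the kernel $K_m$ enters through the normalised ultraspherical polynomial $\mathcal{P}_m^{\frac{d}{2}-1}$, which satisfies $|\mathcal{P}_m^{\frac{d}{2}-1}(u)|\le\mathcal{P}_m^{\frac{d}{2}-1}(1)=1$ for $u\in[-1,1]$ (for $d=2$ this is just $|T_m|\le 1$). Consequently both estimates should follow, with constants independent of $m$, from the pointwise bounds of Proposition 2.1 for the Hermite Riesz kernels $R_i$ together with the integral inequality of Lemma 2.4. Concretely, fix a unit vector $e_1\in\sd$. Since $\int_{\sd}K_t(rx',sy')\mathcal{P}_m^{\frac{d}{2}-1}(x'\cdot y')\,dy'$ is independent of the direction $x'$ by the Funk--Hecke formula, we may evaluate $K_m(r,s)=\int_0^\infty\big(\int_{\sd}K_{t,m}(x,y)\,dy'\big)t^{-\frac12}dt$ with $x=re_1$. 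Because $\mathcal{P}_m^{\frac{d}{2}-1}(e_1\cdot y')$ does not depend on $r$, the operators $\pa_r$ and $\pa_r+r$ act only on the factor $K_t(re_1,sy')$; using $\pa_r K_t(re_1,sy')=\pa_{x_1}K_t(x,sy')\big|_{x=re_1}$ together with $r=x_1$ at $x=re_1$, interchanging the $t$- and $y'$-integrals, and invoking the representation $R_i(x,y)=\frac{1}{\sqrt\pi}\int_0^\infty(\pa_{x_i}+x_i)K_t(x,y)t^{-\frac12}dt$, I would arrive at the key identity
\[
(\pa_r+r)K_m(r,s)=\sqrt\pi\int_{\sd}R_1(re_1,sy')\,\mathcal{P}_m^{\frac{d}{2}-1}(e_1\cdot y')\,dy'.
\]

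Part (1) is then immediate: since $|\mathcal{P}_m^{\frac{d}{2}-1}|\le1$ on $[-1,1]$, the identity gives $|(\pa_r+r)K_m(r,s)|\le\sqrt\pi\int_{\sd}|R_1(re_1,sy')|\,dy'$, and estimate (1) of Proposition 2.1 bounds the right side by $C\int_{\sd}|re_1-sy'|^{-d}\,dy'$. This is precisely the integral already estimated in the proof of Proposition 2.3: in polar coordinates it is a constant multiple of $\int_0^1(r^2+s^2-2rsu)^{-d/2}(1-u)^{\frac{d-3}{2}}\,du$, and Lemma 2.4 with $A=r^2+s^2$, $B=2rs$, $c=\tfrac{d}{2}-1$, $\lambda=\tfrac12$ (with the corresponding elementary estimate when $d=2$) bounds it by $C|r-s|^{-1}(r^2+s^2)^{-\frac{d-1}{2}}\simeq C\big(\mu_{\frac{d}{2}-1}(B(r,|r-s|))\big)^{-1}$. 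The whole point is that this constant does not involve $m$.

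For part (2) I would differentiate the displayed identity once more in $r$; again $\mathcal{P}_m^{\frac{d}{2}-1}(e_1\cdot y')$ is inert, so $\pa_r(\pa_r+r)K_m(r,s)=\sqrt\pi\int_{\sd}\pa_{x_1}R_1(x,sy')\big|_{x=re_1}\,\mathcal{P}_m^{\frac{d}{2}-1}(e_1\cdot y')\,dy'$, whence $|\pa_r(\pa_r+r)K_m(r,s)|\le\sqrt\pi\int_{\sd}|\N_x R_1(re_1,sy')|\,dy'$. Applying estimate (2) of Proposition 2.1 gives the bound $C\int_{\sd}|re_1-sy'|^{-d-1}\,dy'$, and Lemma 2.4 with the same $A,B,c$ but with $\lambda=1$ (so that $c+\lambda+\tfrac12=\tfrac{d+1}{2}$ matches the new exponent) yields $C|r-s|^{-2}(r^2+s^2)^{-\frac{d-1}{2}}\simeq C|r-s|^{-1}\big(\mu_{\frac{d}{2}-1}(B(r,|r-s|))\big)^{-1}$, which is (2).

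The step that requires genuine care is the justification of differentiating under the $t$- and $y'$-integrals and of the Fubini interchange, all carried out away from the diagonal $r=s$: for $r$ in a small neighbourhood of a point $\ne s$ one has $|re_1-sy'|$ bounded below uniformly in $y'\in\sd$, and then $K_t$, $\pa_{x_1}K_t$ and $\pa_{x_1}^2K_t$, being Gaussians times polynomials, are dominated by fixed integrable functions on $(0,\infty)\times\sd$ — exactly the analysis that makes the integral in $(2.1)$ and the estimates of Proposition 2.1 converge. Once this is in place the proposition is essentially a corollary of Propositions 2.1 and 2.3 and of Lemma 2.4. An alternative route, which I would not pursue here, is to substitute the Hecke--Bochner identity $e^{-tH}(f_{m,j}Y_{m,j})(r\om)=c_d\,r^m Y_{m,j}(\om)T_t^{\alpha+m}\widetilde{f}_{m,j}(r)$; integrating in $t$ this gives $K_m(r,s)=c_d\,r^ms^m L_{\alpha+m}^{-\frac12}(r,s)$ with $c_d$ independent of $m$, after which the monotonicity $\mathcal{I}_{\alpha+m}\le\mathcal{I}_\alpha$ of the modified Bessel functions in their order reduces matters to the case $m=0$; but this brings in Bessel estimates that the argument above avoids.
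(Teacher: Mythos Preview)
Your proof is correct and follows essentially the same route as the paper. The paper writes $\partial_r=\sum_j x'_j\,\partial_{x_j}$ and $r=\sum_j x'_j x_j$ for a general $x'\in\sd$, thereby obtaining $(\partial_r+r)K_m(r,s)=\sum_j x'_j\int_{\sd}R_j(x,y)\mathcal{P}_m^{\frac{d}{2}-1}(x'\cdot y')\,dy'$ and then invokes $|\mathcal{P}_m^{\frac{d}{2}-1}|\le1$ together with the estimates of Proposition~2.1 exactly as you do; your choice to fix $x'=e_1$ (legitimate by the Funk--Hecke identity) just collapses that sum to the single term $R_1$, but the substance---reduce to the Riesz kernel, bound the ultraspherical factor by $1$, then apply Proposition~2.1 and Lemma~2.4---is identical.
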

 \begin{proof}
 $ \Big ( \frac{\pa}{\pa r} + r \Big ) K_m (r,s) = \int _0
 ^{\infty} \int_{\sd}\Big ( \frac{\pa}{\pa r} + r \Big ) K_t(x,y)\mathcal{P}_m
 ^{\frac{d}{2}-1}(x'\cdot y') dy'  t^{-\frac{1}{2}}dt .$ \\
 Since $\frac{\pa}{\pa r} K_t(x,y) = \sum _{j=1} ^{d} x_j \frac{\pa}{\pa x_j}K_t(x,y) $ and $ r = \sum _{j=1} ^{d} x'_jx_j $
 we need to estimate
 $$\sum _{j=1} ^{d} x'_j \int _0 ^{\infty} \int_{\sd}\Big ( \frac{\pa}{\pa x_j} + x_j \Big ) K_t(x,y)\mathcal{P}_m
 ^{\frac{d}{2}-1}(x'\cdot y') dy' t^{-\frac{1}{2}}dt $$
 which is given by
 $$ \sum _{j=1} ^{d} x'_j \int_{\sd}R_j (x,y)\mathcal{P}_m ^{\frac{d}{2}-1}(x'\cdot y') dy' $$
 where $R_j(x,y)$ are the kernels of the Riesz transforms. The estimates
 on $R_j(x,y) $ can be used along with the fact that $| \mathcal{P}_m ^{\frac{d}{2}-1}(u)| \leq 1 $ to get the
 estimate $$\int _{\sd} |rx'-sy'|^{-d} dy'$$
 which gives the required estimate. The estimate on the gradient
 is proved in a similar way using the estimate for $\frac{\pa}{\pa x_i} R_j(x,y) $.
 \end{proof}
 \begin{rem}
 It is also possible to prove uniform estimates for the
 kernels of the operators taking $f_{m,j} $ into $\frac{m}{r} F_{m,j} $. These estimates lead
 to a different proof of Theorem 1.1 as in \cite{CR}.
 \end{rem}


\section{Riesz transforms for the special Hermite operator}
\subsection{Special Hermite operator and Riesz transforms}
In this section we recall preliminaries about special Hermite
expansion (which are discussed in Chapter 1 and Chapter 2. of
\cite{ST}) required for the proof of Theorem 1.2. For $f \in
L^2(\C^d)$, the special Hermite expansion of $ f $ is given by
$$ f(z) = (2\pi)^{-d} \sum_{k=0} ^\infty f \times \varphi_k (z) $$
where $\varphi_k =
L_k^{d-1}(\frac{1}{2}|z|^2)e^{-\frac{1}{4}|z|^2} $ are the
Laguerre functions, $L_k^{d-1}$ being Laguerre polynomials of type
$(d-1) $. And the convolution in the above is called the twisted
convolution defined as follows: for given $f, g \in L^2(\C^d), $
$$f \times g(z) = \int_{\C^d} f(z-w)g(w)e^{\frac{i}{2}\Im (z\cdot
\overline{w})}dw.$$ Now consider the the special Hermite operator
$L$ given by
$$L= -2\sum_{j=1}^d (Z_j\overline{Z}_j + \overline{Z}_jZ_j) = -\Delta_z+\frac{1}{4}|z|^2- i N $$
where $\Delta_z$ is the Laplacian on $\C^d$, $ Z_j = \Big
(\frac{\pa}{\pa z_j}+\frac{1}{4}\overline{z}_j \Big )$, $Z_j =
\Big (\frac{\pa}{\pa \overline{z}_j}-\frac{1}{4}z_j \Big ) $ and
$N=\sum_{j=1}^d \Big ( x_j\frac{\pa}{\pa y_j} - y_j \frac{\pa}{\pa
x_j} \Big )$ with $z=x+iy \in \C^d $. It is known that $ L $ is a
positive, symmetric and elliptic operator on $\C^d$(for more
details see Chapter 1 and Chapter 2. in \cite{ST}). Special
Hermite expansion is the spectral decomposition of $L$ and $
(2\pi)^{-d} (f \times  \varphi_k) $ is the projection of $f$ onto
the eigenspace corresponding to the eignevalue $(2k+d)$. If
$e^{-tL} $ is the special Hermite semigroup generated by $L$ then
 \begin{align}
 e^{-tL}f(z) = (2\pi)^{-d}\sum_{k=0}^\infty e^{-(2k+d)t}f \times
 \varphi_k(z)
 \end{align}
 for functions $f \in L^2(\C^d) $.
 For Schwartz class functions $f$, it is clear that $e^{-tL}f(z)
 =f\times p_t(z) $ where
 $$p_t(z)=(2\pi)^{-d} \sum_{k=0}^\infty e^{-(2k+d)t}\varphi_k(z). $$
 The generating function for Laguerre functions of type $(d-1)$
 leads to the explicit formula
 \begin{align}
 p_t(z)= (2\pi)^{-d}(\sinh t)^{-d}e^{-\frac{1}{4}|z|^2 \coth t}.
 \end{align}
 The operator $ L^{-\frac{1}{2}}$ can be defined using spectral
 theorem, which is also given by
 $$ L^{-\frac{1}{2}}f(z)= \frac{1}{\sqrt{\pi}}\int_0 ^\infty e^{-tL}f(z)t^{-\frac{1}{2}}dt. $$
 Consequently the Riesz transforms $S_j$ for the special Hermite
 operator are defined by
 \begin{eqnarray}
   S_j f(z)  =  Z_j L^{-\frac{1}{2}}f(z)
            &= & \frac{1}{\sqrt{\pi}}\int_0 ^\infty Z_j
               e^{-tL}f(z)t^{-\frac{1}{2}}dt  \\
           & = & f \times s_j(z)\notag{}
 \end{eqnarray}
 where $$ s_j(z)=\frac{1}{\sqrt{\pi}}\int_0 ^\infty Z_j p_t (z)
 t^{-\frac{1}{2}}dt .$$ It is known that $S_j$ are bounded on $
 L^p(\C^d)$,\; $1<p<\infty $ and weak type $(1,1)$ (see Theorem 2.2.2 in \cite{ST}). The mixed norm estimates
 for the Riesz transforms $S_j$ will be proved in subsection 3.3.
 \subsection{Bigraded spherical harmonics}
 In order to study mixed norm estimates for the Riesz transforms
 associated to special Hermite expansions, we need to make use of
 several properties of bigraded spherical harmonics. If
 $\mathcal{H}_{N}$ stands for the space of (ordinary) spherical
 harmonics of degree $N$ on $\std$ then we have an action of the
 unitary group $U(d)$ on $\mathcal{H}_{N}$. Under this action $\Hc
 _N$ decomposes into irreducible pieces $\mathcal{H}_{m,n}$ where
 $m$ and $n$ are non-negative integers with $m+n = N$. The
 members of $\Hc _{m,n}$ are called bigraded spherical harmonics.\\

 Let $\D _z = \D _x +\D _y $ stand for the Laplacian on $\C^d$
 identified with $\R ^{2d}$. Then bigraded solid harmonics of
 bidegree $(m,n)$ are harmonic functions on $\C^d$ of the form
 $$P(z)= \sum _{|\alpha|=m} \sum _{|\beta|= n} c_{\alpha, \beta} z^{\alpha}\overline{z}^{\beta}.$$
 Note that $P(z)$ satisfies the homogeneity condition $P(\lambda
 z)= \lambda ^{m}\overline{\lambda} ^{n}P(z)$ for any $\lambda \in \C
 .$ The elements of $\Hc _{m,n}$ are just restrictions to $\std$ of
 bigraded solid harmonics of bidegree $(m,n)$. As in the case of
 spherical harmonics we have
 $$ L^2 (\std) = \bigoplus _{m,n \geq 0} \Hc _{m,n}.$$
 Here we use the standard inner product
 $$(f,g)_{L^2(\std)}= \int_{\std} f(z')\overline{g(z')}dz'$$
 on the unit sphere. Let $d(m,n)$ stand for the dimension of
 $\Hc_{m,n}$. We fix an orthonormal basis $\{ Y_{m,n} ^j : j =
 1,2,\ldots , d(m,n)\}$ for $\Hc _{m,n}$.\\

 We write $z=x+iy$ for elements of $\C^d$ and identify $z$ with
$(x,y)\in \R ^{2d}$. We use the notation $\N^x$ and $\N^y$ for the
gradient on $\R^d$ in the $x$ and $y$ variables respectively. Let
$\N = ( \N^x , \N^y )$ stand for the gradient on $\R ^{2d}$.
Define the complex gradient $ \N ^z = \frac{1}{2}(\N^x - i \N^y)
$. Then it is clear that $\N^z = ( \frac{\pa}{\pa z_1},
\frac{\pa}{\pa z_2}, \cdots , \frac{\pa}{\pa z_d}) $ where
$\frac{\pa}{\pa z_j} =\frac{1}{2}\Big (\frac{\pa}{\pa x_j}- i
\frac{\pa}{\pa y_j} \Big )$ , $j=1,2, \ldots , d $. If we write
$(x,y)= r(\xi , \eta ) $, $ (\xi , \eta )\in \std $ , then the
gradient $ \N = (\N ^x, \N ^y )$ can be written as
\begin{eqnarray}
\N = \frac{1}{r} \N_0 + (\xi , \eta ) \frac{\pa}{\pa r}
\end{eqnarray}
Here $ \N _0$ is the spherical part of the gradient $\N $. By
writing $\N _0 = (\N_0 ^x, \N_0 ^y) $ we obtain the decomposition
\begin{eqnarray}
\N ^x = \frac{1}{r} \N_0 ^x + \xi \frac{\pa}{\pa r}, \; \;   \N ^y
= \frac{1}{r} \N_0 ^y + \eta \frac{\pa}{\pa r}.
\end{eqnarray}
This leads us to the decomposition
\begin{eqnarray}
\N ^z = \frac{1}{r} \N_0 ^z + \frac{1}{2r} \; \overline{z} \;
\frac{\pa}{\pa r} \label{C}
\end{eqnarray}
where $\N_0 ^z = \frac{1}{2} (\N_0 ^x -i \N_0 ^y) $. With these
notations we can now prove the following result.

\begin{prop}
Let $P_{m,n} $ be any bigraded solid harmonic of bidegree $(m,n)
$. With the notation $\La z,\; w \Ra = z\cdot \overline{w} =
\sum_{j=1}^d z_j \overline{w_j} $ we have the following:

\begin{enumerate}
\item

     \begin{enumerate}
     \item $\La \overline{z}, \; \N^z P_{m,n}(z)\Ra = m
     \overline{P_{m,n}}(z)$ \\
     \item $\La \overline{z}, \; \N_0 ^z P_{m,n}(z) \Ra = \frac{r}{2}(m-n) \overline{P_{m,n}}(z) $\\
     \item $\La \overline{\zeta}, \; \N_0 ^z P_{m,n}(\zeta) \Ra = \frac{i}{2}(\xi \cdot \N_0 ^y \overline{P_{m,n}(\zeta)}
      - \eta \cdot \N_0 ^x \overline{P_{m,n}(\zeta)} )$ where $\zeta = \xi + i \eta .$ \\
     \end{enumerate}

 \item $ \La \N^z P_{m,n},\; \N^z P_{m',n'}\Ra = \frac{1}{r^2}
 \La \N_0 ^z P_{m,n},\; \N_0 ^z P_{m',n'}\Ra +
 \frac{1}{4r^2}((3m+n)m'+(m-n)n')P_{m,n}\overline{P_{m',n'}}.$ \\

 \item For any two functions $f$ and $g$ on $\std $ then\\
   $\La \N_0 ^z f(\zeta), \; \N_0 ^z g(\zeta)\Ra = \frac{1}{4} \N_0 f(\zeta)\cdot \N_0 \overline{g(\zeta)} +
     \frac{i}{4}(\N_0 ^x f(\zeta)\cdot \N_0 ^y \overline{g(\zeta)} - \N_0 ^y f(\zeta)\cdot \N_0 ^x
 \overline{g(\zeta)}).$\\

\item    $ \int _{\std} \N_0 ^x f(\zeta)\cdot \N_0 ^y
\overline{g(\zeta)}d\zeta$

$= - \int _{\std}(\N_0 ^y \cdot \N_0 ^x f(\zeta))
\overline{g(\zeta)} d\zeta +(2d-1) \int _{\std} (\eta \cdot \N_0^x
f(\zeta))\overline{g(\zeta)}d\zeta
   .$ \\

\item $\int _{\std} \La \N_0 ^z P_{m,n}(\zeta), \; \N_0 ^z
\overline{P_{m',n'}(\zeta)} \Ra d\zeta = \lambda _d(m,n)
 \La P_{m,n},\; P_{m',n'} \Ra $ \\
where $\lambda _d(m,n)= \frac{1}{4}((m+n)^2+(4d-3)m-n ). $

 \end{enumerate}

\end{prop}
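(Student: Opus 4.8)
The plan is to prove the five assertions more or less in the order stated, since later parts build on earlier ones, using throughout the homogeneity property $P_{m,n}(\la z) = \la^m \overline{\la}^n P_{m,n}(z)$ together with the decomposition $\N^z = \frac{1}{r}\N_0^z + \frac{1}{2r}\,\overline{z}\,\frac{\pa}{\pa r}$ from \eqref{C}. For (1)(a), I would differentiate the identity $P_{m,n}(e^{i\theta}z) = e^{i(m-n)\theta}P_{m,n}(z)$ in $\theta$ and also use Euler's relation for the bihomogeneity: writing $t z$ for real $t>0$ gives $\sum_j z_j \frac{\pa}{\pa z_j}P_{m,n} + \sum_j \overline z_j \frac{\pa}{\pa \overline z_j}P_{m,n} = (m+n)P_{m,n}$, while the $U(1)$-action gives $\sum_j z_j \frac{\pa}{\pa z_j}P_{m,n} - \sum_j \overline z_j \frac{\pa}{\pa \overline z_j}P_{m,n} = (m-n)P_{m,n}$. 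Adding these yields $\sum_j z_j \frac{\pa}{\pa z_j}P_{m,n} = m P_{m,n}$; conjugating and relabeling gives $\La \overline z, \N^z P_{m,n}\Ra = \sum_j \overline z_j \frac{\pa}{\pa z_j}\overline{\overline{P_{m,n}}} = m\,\overline{P_{m,n}}$, which is (1)(a). Then (1)(b) follows by substituting \eqref{C} into (1)(a): $m\overline{P_{m,n}} = \frac{1}{r}\La \overline z, \N_0^z P_{m,n}\Ra + \frac{1}{2r}|z|^2 \frac{\pa}{\pa r}\overline{P_{m,n}}$, and since $\overline{P_{m,n}}$ is homogeneous of degree $m+n$ in $r$ one has $\frac{\pa}{\pa r}\overline{P_{m,n}} = \frac{m+n}{r}\overline{P_{m,n}}$ and $|z|^2 = r^2$; solving for the middle term gives $\La \overline z,\N_0^z P_{m,n}\Ra = \frac{r}{2}(m-n)\overline{P_{m,n}}$. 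Restricting to $r=1$ (so $z = \zeta$) and expanding $\N_0^z = \frac12(\N_0^x - i\N_0^y)$ while noting $\La\overline\zeta, v\Ra$ has real part $\xi\cdot(\text{Re part})+\eta\cdot(\text{Im part})$ and that the real part of $\frac{r}{2}(m-n)\overline{P_{m,n}}$ must match — comparing real and imaginary parts of the identity $\xi\cdot\N_0^z\overline{P} $-type expansion — produces (1)(c).

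For part (2), I would expand $\La \N^z P_{m,n}, \N^z P_{m',n'}\Ra$ directly using \eqref{C}: the cross terms involve $\La \N_0^z P_{m,n}, \overline z\,\frac{\pa}{\pa r}P_{m',n'}\Ra$ and its conjugate, which are evaluated using (1)(b) (with the appropriate conjugations) together with the radial homogeneity $\frac{\pa}{\pa r}P_{m',n'} = \frac{m'+n'}{r}P_{m',n'}$, while the $\overline z\,\frac{\pa}{\pa r}$–$\overline z\,\frac{\pa}{\pa r}$ term gives $\frac{1}{4r^2}(m+n)(m'+n')P_{m,n}\overline{P_{m',n'}}$ using $|z|^2 = r^2$. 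Collecting terms and simplifying the coefficient $\frac14\big[(m-n)(\cdot) + \text{conj}(m-n) + (m+n)(m'+n')\big]$ should yield $\frac14((3m+n)m' + (m-n)n')$ after careful bookkeeping — this bookkeeping is the first place one must be meticulous about which factor carries a conjugate. Part (3) is a purely algebraic identity: expand both sides of $\La \N_0^z f, \N_0^z g\Ra = \La \frac12(\N_0^x - i\N_0^y)f, \frac12(\N_0^x - i\N_0^y)g\Ra$ using the Hermitian inner product $\La a,b\Ra = a\cdot\overline b$, separate real and imaginary parts, and match; no analysis is needed.

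Part (4) is an integration by parts on the compact manifold $\std$ with no boundary: starting from $\int_{\std}\N_0^x f\cdot\N_0^y\overline g\,d\zeta$, I would write $\N_0^x = \N^x - \xi\frac{\pa}{\pa r}$ restricted to the sphere (the spherical part of the Euclidean gradient), integrate by parts in $\R^{2d}$ against a radial cutoff or equivalently use the divergence theorem on the sphere, picking up a term $-\int (\N_0^y\cdot\N_0^x f)\overline g$ from moving the derivative and a curvature/mean-curvature term $(2d-1)\int(\eta\cdot\N_0^x f)\overline g$ coming from the divergence of the unit normal field $(\xi,\eta)$ on $\std \subset \R^{2d}$ (whose mean curvature contributes the factor $2d-1$). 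I expect this to be the main obstacle: getting the constant $2d-1$ right requires care with how the spherical gradient and spherical divergence interact with the normal direction, and one should double-check it against the analogous real-variable identity (fact (ii) used in Section 2, $\int_{\sd}\N_0 Y_{m,j}\cdot\N_0 Y_{m',j'} = m(m+d-2)\delta$) in the case $f=g$. Finally, part (5) is assembled from the previous parts: write $\La\N_0^z P_{m,n}, \N_0^z\overline{P_{m',n'}}\Ra$ via (3), integrate over $\std$, apply (4) to convert the mixed term $\int\N_0^x\cdot\N_0^y$ into $-\int(\N_0^y\cdot\N_0^x)(\cdot) + (2d-1)\int(\eta\cdot\N_0^x)(\cdot)$, then identify $\int_{\std}\N_0 P_{m,n}\cdot\N_0\overline{P_{m',n'}}$ with an eigenvalue of the spherical Laplacian (namely $(m+n)(m+n+2d-2)$ times $\La P_{m,n},P_{m',n'}\Ra$, since $P_{m,n}$ restricted to $\std$ is a spherical harmonic of degree $m+n$ in $2d$ variables) and use (1)(c) to handle the remaining antisymmetric pieces, which contribute the lower-order part of $\la_d(m,n)$. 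Combining, the total coefficient collapses to $\frac14((m+n)^2 + (4d-3)m - n)$; verifying that all cross-terms combine to exactly this is the final computational check.
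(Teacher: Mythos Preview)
Your plan is correct and follows essentially the same route as the paper: Euler-type homogeneity for 1(a), the decomposition \eqref{C} plus radial homogeneity for 1(b) and (2), straight algebraic expansion for (3), spherical integration by parts for (4), and assembly via the spherical-Laplacian eigenvalue $(m+n)(m+n+2d-2)$ together with 1(c) for (5). The only places where the paper is slightly more explicit are in 1(c), where the crucial vanishing $(\xi,\eta)\cdot\N_0 P_{m,n}(\zeta)=0$ (tangentiality of the spherical gradient) is what kills the real part, and in (4), where the paper invokes a known integration-by-parts identity for the spherical gradient components $D_j$ (Proposition~8.7 in \cite{DX}) rather than rederiving the $(2d-1)$ curvature term---but your description captures exactly that content.
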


\begin{proof} 1(a) follows by simple calculation:
$$\La  \overline{z}, \; \N^z P_{m,n} \Ra = \sum_{j=1}^d \overline{z_j}\; \overline{\frac{\pa}{\pa z_j} P_{m,n}(z)}.$$
If $ P_{m,n} = \sum_{|\ap| = m} ^{} \sum_{| \bt |= n} ^{} c_{\ap
\bt}\; z^{\ap}  \overline{z}^{\bt} \; $ then
$$ \; \overline{z_j}
\; \overline{\frac{\pa}{\pa z_j} P_{m,n}(z)} = \sum_{|\ap| =
m}\sum_{|\bt| = n} \overline{c}_{\ap \bt} \; \ap _j \;
\overline{z}^{\ap} z^{\bt}.$$ Summing over $j$ and noting that
$|\ap| = m $, we get the result. To prove $1(b)$ we use
\eqref{C} which gives $\N_0 ^z = r \N^z -
\frac{1}{2}\overline{z}\frac{\pa}{\pa r} $. Hence
$$\N_0 ^z P_{m,n}(z) = r \N^z P_{m,n}(z)- \frac{1}{2} \; \overline{z} \; \frac{\pa}{\pa r}P_{m,n}(z).$$
By virtue of homogeneity $\frac{\pa}{\pa r}P_{m,n}(z) =
\frac{1}{r} (m+n) P_{m,n}(z)$ and therefore
$$\N_0 ^z P_{m,n}(z) = r \N ^z  P_{m,n}(z) -
\frac{(m+n)}{2r} \; \overline{z} \; P_{m,n}(z).$$ Taking inner
product with $\overline{z} $ and using $1(a)$ we get
$$\La \overline{z},\; \N_0 ^z P_{m,n}(z) \Ra = rp \overline{P_{m,n}(z)} - \frac{r}{2}(m+n) \overline{P_{m,n}(z)}= \frac{r}{2}(m-n)\overline{P_{m,n}(z)}.$$
To prove 1(c) we make use of the fact that $(\xi, \eta)\cdot \N
P_{m,n}(\zeta) = 0$ (see Lemma 2.2 in \cite{PX}).
$$\La \overline{\zeta}, \; \N_0 ^z P_{m,n}(z) \Ra = \frac{1}{2}\La (\xi -i \eta), (\N_0 ^x -i \N_0 ^y )P_{m,n} (\zeta)  \Ra .$$
Simplifying and using $(\xi, \eta)\cdot \N P_{m,n}(\zeta) = 0 $ we
obtain
$$\La \overline{\zeta},\; \N_0 ^z P_{m,n}(\zeta) \Ra = \frac{i}{2} ( \xi\cdot \N_0 ^y \overline{P}_{m,n}(\zeta) - \eta \cdot \N_0 ^x
\overline{P}_{m,n}(\zeta)).$$ Coming to the proof of $(2)$ we see
that
\begin{eqnarray*}
  \N ^z P_{m,n}(z) &=& \frac{1}{r} \Big ( \N_0 ^z + \frac{1}{2}\;  \overline{z} \;
\frac{\pa}{\pa r} \Big ) P_{m,n}(z) \\
   &=& \frac{1}{r} \Big ( \N_0 ^z P_{m,n}(z) +
\frac{(m+n)}{2r} \; \overline{z} \; P_{m,n}(z)\Big ).
\end{eqnarray*}
We have a similar expression for $\N ^z P_{m',n'}(z) $.
Consequently,\\ $\La \N ^z P_{m,n}(z), \; \N ^z P_{m',n'}(z) \Ra $
is given by
  $$\frac{1}{r^2} \Big ( \La \N_0 ^z P_{m,n}(z), \; \N_0 ^z P_{m',n'}(z) \Ra + \frac{(m+n)}{2r} \; P_{m,n}(z)  \La \overline{z}, \; \N_0 ^z P_{m',n'}(z)
  \Ra$$
  $$   + \frac{(m'+n')}{2r} \; \overline{P}_{m',n'}(z)  \La  \N_0 ^z P_{m,n}(z), \; \overline{z}\Ra
  + \frac{(m+n)(m'+n')}{4r^2} \; r^2 \; P_{m,n}(z)\overline{P}_{m',n'}(z)\Big ).$$
 Using $1(b)$ and simplifying we get\\

$\La \N ^z P_{m,n}(z), \;  \N ^z P_{m',n'}(z) \Ra$\\

$ = \frac{1}{r^2} \; \La \N_0 ^z P_{m,n}(z), \; \N_0 ^z
P_{m',n'}(z) \Ra
+\frac{1}{4r^2}((3m+n)m'+(m-n)n')\;P_{m,n}(z)\overline{P}_{m',n'}(z).$\\

\hspace{-5mm} Recalling the definition of $ \N_0 ^z $ we see that
$$\La \N_0 ^z f(\zeta), \; \N_0 ^z g(\zeta) \Ra = \frac{1}{4} \La (\N_0 ^x -i \N_0 ^y)f(\zeta), \; (\N_0 ^x -i \N_0 ^y)g(\zeta)\Ra .$$
The right hand side simplifies into
$$ \N_0 f(\zeta)\cdot \N_0 \overline{g}(\zeta)+i ( \N_0 ^x f(\zeta)\cdot \N_0 ^y \overline{g}(\zeta)- \N_0 ^y f(\zeta)\cdot \N_0 ^x \overline{g}(\zeta)) $$
which is the required expression for proving (3). Considering now
the integral in $(4)$ which can be evaluated using Proposition
8.7, Chapter 1 in \cite{DX}. Writing $\N_0 = (D_1, \cdots , D_d,
D_{d+1}, \cdots , D_{2d})$ so that $\N_0 ^x =(D_1,D_2, \cdots ,
D_d)$, $ \N_0 ^y = (D_{d+1}, \cdots , D_{2d}) $ we have
\begin{eqnarray*}
 & & \int _{\std}\N_0 ^x f(\zeta)\cdot \N_0 ^y \overline{g}(\zeta)d
 \zeta  =  \sum _{j=1} ^d \int _{\std}D_jf(\zeta)D_{d+j}\overline{g}(\zeta)d
 \zeta \\
 & =& -\sum_{j=1}^d \int_{\std}D_{d+j}D_jf(\zeta)\overline{g}(\zeta) d\zeta +
 (2d-1)\sum_{j=1}^d\int_{\std} \eta _j D_jf(\zeta) \overline{g}(\zeta)d\zeta \\
 & =& -\int_{\std}(\N_0 ^y\cdot
\N_0^xf(\zeta))\overline{g}(\zeta)d\zeta
 +(2d-1)\int _{\std}(\eta \cdot \N_0^xf(\zeta))
 \overline{g}(\zeta)d\zeta.
\end{eqnarray*}
This proves $(4)$. Finally in order to prove $(5)$ we integrate
$(3)$ over $\std $ and make use of $(4)$. The result is , with
$P=P_{m,n},\; Q=P_{m',n'}$
\begin{eqnarray*}
 \int_{\std}\La \N_0 ^zP(\zeta), \; \N_0 ^z Q(\zeta) \Ra d\zeta=\frac{1}{4}\int_{\std}\N_0P(\zeta) \cdot\N_0 \overline{Q}(\zeta)(\zeta)d\zeta & &\\
    +  \frac{i}{4}(2d-1)\int_{\std}(\eta \cdot \N_0 ^x P(\zeta) - \xi \cdot \N_0
 ^yP(\zeta))\overline{Q}(\zeta)d\zeta
\end{eqnarray*}
 since $\N_0 ^x\cdot \N_0 ^y f = \N_0 ^y \cdot \N_0 ^x f $. Using
 $1(c)$ we convert the second integral in the above into
 $ -\frac{2d-1}{2} \int_{\std} \La \overline{\zeta}, \; \N_0 ^z \overline{P}(\zeta)\Ra \; \overline{Q(\zeta)} \; d\zeta.$
 Since $P= P_{m,n} $ it follows that $\overline{P} $ is of bigraded $(n,m)$ and hence by
 $1(b)$ we have $\La \overline{\zeta}, \; \N_0 ^z \overline{P}(\zeta) \Ra = \frac{(n-m)}{2}P(\zeta)$. Using this we have
 obtained
 \begin{eqnarray*}
 & &\int _{\std} \La \N_0 ^zP, \; \N_0 ^z Q \Ra \; d\zeta\\
 & & =\frac{1}{4}\int_{\std}\N_0P(\zeta)\cdot \N_0 \overline{Q}(\zeta) \; d\zeta
  +\frac{(2d-1)}{4}(m-n)\int_{\std}P(\zeta)\overline{Q}(\zeta)
  d\zeta.
 \end{eqnarray*}
 The first integral simplifies to
 $$-\int_{\std}\Delta_0P(\zeta)\overline{Q}(\zeta)d\zeta
 = (m+n)(m+n+2d-2)\int_{\std}P(\zeta) \; \overline{Q}(\zeta) \; d\zeta $$
 where $\Delta_0 $ is the spherical Laplacian for which $P $ is an
 eigenfunction. Thus
 \begin{eqnarray*}
  \int _{\std} \La \N_0 ^zP, \; \N_0 ^z Q \Ra \; d\zeta = \frac{1}{4}((m+n)(m+n+2d-2)+(2d-1)(m-n))& &\\
  \int_{\std}(\N_0P \cdot \N_0 \overline{Q})(\zeta) \; d\zeta
 \end{eqnarray*}
 which can be simplified to prove $(5)$.
 \end{proof}
\subsection{Mixed norm estimates for $S_j$.}
In this subsection we prove the mixed norms estimates for $S_j $
stated in Theorem 1.2. As in Section 2.2 we use the idea of Rubio
de Francia along with some weighted norm inequalities satisfied by
$S_j$ and the transform properties of the vector $S=(S_1, S_2,
\cdots , S_d )$ under the action of the unitary group. As the
ideas can be applied even to higher order Riesz transforms we
start with the following definition.\\

Given a bigraded solid harmonic $ P\in \Hc _{m,n}$ we define $R_P
$ the higher order Riesz transform associated to $P $ by the
prescription
$$ W(R_Pf)=W(f)G(P)H^{-\frac{1}{2}(m+n)} .$$
\noindent Here $W(f) $ is the Weyl transform of $f$ and $G(P)$ is
the Weyl correspondence of the polynomial $P$. We refer to Geller
\cite{G}, Thangavelu \cite{ST1} and Sanjay-Thangavelu \cite{SST},
for various facts about these objects. We let $\rho
(k)f(z)=f(k\cdot z) $ stand for the action of $U(d) $ on
functions. We also make use of the metaplectic representation $\mu
(k) $ which is defined by the property
$$W (\rho (k)f) = \mu(k)^* W(f)\mu(k) .$$
The operator $\mu(k) $, $k \in U(d) $ are unitary on $L^2(\R^d)$.
The following proposition is easy to prove.
\begin{prop}
 For any $k\in U(d)$ we have
 $$ \rho (k)R_Pf(z) = R_{\rho (k)P}(\rho (k)f)(z).$$
\end{prop}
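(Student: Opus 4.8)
The plan is to verify the asserted identity after applying the Weyl transform $W$, which is injective, and then to reduce the whole statement to two structural facts: that $\mu(k)$ commutes with $H$, and that the Weyl correspondence is covariant under the metaplectic representation. First I would use the definition $W(R_Pf)=W(f)G(P)H^{-\frac{1}{2}(m+n)}$ together with the intertwining property $W(\rho(k)f)=\mu(k)^*W(f)\mu(k)$ to compute
$$ W(\rho(k)R_Pf)=\mu(k)^*W(R_Pf)\mu(k)=\mu(k)^*W(f)\,G(P)\,H^{-\frac{1}{2}(m+n)}\mu(k). $$
Since the $U(d)$-action preserves bidegree, $\rho(k)P$ is again a bigraded solid harmonic of bidegree $(m,n)$, and therefore
$$ W\big(R_{\rho(k)P}(\rho(k)f)\big)=W(\rho(k)f)\,G(\rho(k)P)\,H^{-\frac{1}{2}(m+n)}=\mu(k)^*W(f)\mu(k)\,G(\rho(k)P)\,H^{-\frac{1}{2}(m+n)}. $$
Comparing the two right-hand sides, and using that $W(f)$ ranges over a total set of operators as $f$ varies (Plancherel for the Weyl transform), the proposition becomes equivalent to the operator identity
$$ G(P)\,H^{-\frac{1}{2}(m+n)}\mu(k)=\mu(k)\,G(\rho(k)P)\,H^{-\frac{1}{2}(m+n)}. $$

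The second step is to invoke the two facts that make this identity hold. The first is that the metaplectic operators $\mu(k)$ commute with the Hermite operator $H$, hence with every spectral projection of $H$ and with $H^{-\frac{1}{2}(m+n)}$; this is classical and is recorded in the references cited just before the proposition (Geller, Thangavelu, Sanjay--Thangavelu). The second is the covariance of the Weyl correspondence, which in the present normalisation reads $\mu(k)^*G(P)\mu(k)=G(\rho(k)P)$, equivalently $G(P)\mu(k)=\mu(k)G(\rho(k)P)$. Granting these, one simply computes
$$ G(P)H^{-\frac{1}{2}(m+n)}\mu(k)=G(P)\mu(k)H^{-\frac{1}{2}(m+n)}=\mu(k)G(\rho(k)P)H^{-\frac{1}{2}(m+n)}, $$
which is exactly the required identity; applying $W^{-1}$ on the range of $W$ then gives $\rho(k)R_Pf=R_{\rho(k)P}(\rho(k)f)$.

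I expect the only delicate point to be the bookkeeping: one must ensure that $G(\rho(k)P)$ is understood with the same normalisation of the Weyl correspondence as $G(P)$, that the unbounded factor $H^{-\frac{1}{2}(m+n)}$ is handled on a dense invariant domain (for instance finite linear combinations of special Hermite functions, where all manipulations are purely algebraic), and that injectivity of $W$ on that domain legitimises the cancellation of $\mu(k)^*W(f)$. No genuinely new estimate is needed. For the special case $P(z)=z_j$, i.e. $m=1$, $n=0$, the operator $G(z_j)$ is, up to a constant, the annihilation operator so that $R_P$ coincides with $S_j$, and the proposition then specialises to the transformation law for the vector $S=(S_1,\dots,S_d)$ under $U(d)$ that will be fed into the Rubio de Francia argument of the next subsection.
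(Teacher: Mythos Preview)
Your proof is correct and follows essentially the same route as the paper's own argument: both apply the Weyl transform to reduce the identity to an operator equation, invoke the covariance $\mu(k)^*G(P)\mu(k)=G(\rho(k)P)$ of the Weyl correspondence, and use that $\mu(k)$ commutes with $H^{-\frac{1}{2}(m+n)}$. The only cosmetic difference is that the paper starts from $W(R_{\rho(k^{-1})P}f)$ and arrives at $\rho(k)R_{\rho(k^{-1})P}f=R_P(\rho(k)f)$ before substituting, whereas you compute both sides directly; the ingredients and logic are identical.
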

\begin{proof}
 Indeed, we notice that
 \begin{eqnarray*}
   W(R_{\rho(k^{-1})P}f) & = & W(f)G(\rho (k^{-1})P)H^{-\frac{1}{2}(m+n)}  \\
    &=& W(f)\mu (k) G(P)\mu (k)^* H^{-\frac{1}{2}(m+n)}.
 \end{eqnarray*}
 Since $\mu (k)^* $ commutes with $H^{-\frac{1}{2}(m+n)} $ we can
 rewrite the above as

 $$W(R_{\rho (k^{-1})P} f ) = \mu (k) W(\rho (k)f)G(P) H^{-\frac{1}{2}(m+n)}\mu (k)^* .$$
 This simply means that
 $$W(\rho(k)R_{\rho (k^{-1})P} f ) = W(\rho (k)f)G(P) H^{-\frac{1}{2}(m+n)} $$
 which proves the proposition.
\end{proof}
 In \cite{SST} it has been shown that the operators $R_P $ are Oscillatory
 singular integral operators. Appealing to the theorem of
\cite{LZ} we obtain
 \begin{thm}
 For any $P \in \Hc _{m,n}$ the operators $R_P$ satisfy the weighted
 norm inequality
 $$\Big ( \int_{\C^d}|R_{P}f(z)|^p w(z)dz \Big )^{\frac{1}{p}}\leq C \Big ( \int_{\C^d}|f(z)|^p w(z)dz \Big )^{\frac{1}{p}}  $$
 for $1<p<\infty$ whenever $w\in A_p(\C^d)$.
 \end{thm}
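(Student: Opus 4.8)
The plan is to recognise $R_P$ as an oscillatory singular integral operator with a real polynomial phase and a standard Calder\'on--Zygmund kernel, and then to quote the weighted $L^p$ bound for such operators from \cite{LZ}.

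First I would recall from \cite{SST} the concrete description of $R_P$. Unwinding the defining relation $W(R_Pf)=W(f)G(P)H^{-\frac{1}{2}(m+n)}$, one sees that $R_P$ acts by twisted convolution, $R_Pf = f\times k_P$, where $k_P$ is a kernel on $\C^d$ which is homogeneous of degree $-2d$ away from the origin, smooth on $\C^d\setminus\{0\}$, and has vanishing mean value over spheres centred at the origin; moreover $R_P$ is bounded on $L^2(\C^d)$, since $G(P)H^{-\frac{1}{2}(m+n)}$ is bounded on $L^2(\R^d)$ and the Weyl transform is, up to a constant, an isometry. Writing out the twisted convolution and changing variables, this exhibits
$$ R_Pf(z) = \mathrm{p.v.}\int_{\C^d} e^{-\frac{i}{2}\Im(z\cdot\overline{w})}\,k_P(z-w)\,f(w)\,dw, $$
so that $R_P$ has exactly the structure of an oscillatory singular integral operator: the phase $\Phi(z,w)=-\frac{1}{2}\Im(z\cdot\overline{w})$ is a real bilinear, hence polynomial, form on $\C^d\times\C^d$, and the kernel $k_P(z-w)$ is a convolution-type Calder\'on--Zygmund kernel on $\R^{2d}$, obeying the size bound $|k_P(z)|\le C|z|^{-2d}$ and the regularity bound $|\N k_P(z)|\le C|z|^{-2d-1}$.

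With this in hand, the hypotheses of the theorem of \cite{LZ} are met: $R_P$ is $L^2$-bounded, its kernel is a standard Calder\'on--Zygmund kernel, and the phase is a polynomial. That theorem then yields that $R_P$ is bounded on $L^p(\C^d,w\,dz)$ for every $1<p<\infty$ and every $w\in A_p(\C^d)$, with a constant depending only on $p$, the $A_p$ constant of $w$, the Calder\'on--Zygmund constants of $k_P$, and the degree of the phase; this is precisely the asserted inequality. The hard part is really only the first step --- checking that $k_P$ has the claimed homogeneity, smoothness and cancellation so that $R_P$ genuinely fits the oscillatory Calder\'on--Zygmund template --- but this is exactly the analysis of $R_P$ carried out in \cite{SST}, so it is invoked here rather than reproved; granted that, the weighted estimate is an immediate application of \cite{LZ}.
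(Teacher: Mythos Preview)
Your proposal is correct and follows essentially the same route as the paper: the paper simply states that \cite{SST} shows $R_P$ is an oscillatory singular integral operator and then appeals to \cite{LZ} for the weighted bound, and you have spelled out exactly what that means (the twisted-convolution structure, the bilinear polynomial phase $-\tfrac{1}{2}\Im(z\cdot\overline{w})$, the Calder\'on--Zygmund properties of $k_P$, and the $L^2$ boundedness). Your account is in fact more detailed than the paper's one-line justification, but the approach is identical.
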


 Let us specialize to the the case where $P_w(z) = \sum_{j=1}^d z_j \overline{w}_j $,
 $w \in \std $ so that $ R_{P_w} = \sum_{j=1}^d \overline{w}_j
 S_j $. In view of the proposition we get the identity
 $$R_{P_w}f(k\cdot z) = R_{P_{k\cdot w}}(\rho (k)f)(z). $$
 which is the same as saying
 $$\sum_{j=1}^d \overline{w}_jS_j f(k\cdot z)= \sum_{j=1}^d (k\cdot w)_jS_j (\rho (k)f)(z) $$
 We can now use this identify along with the above weighted norm
 inequality to use the idea of Rubio de Francia to get the
 required inequality. We need to remember that radial weight
 function $ w \in A_p(\C^d)$ if and only if $ w(r) \in A_p
 ^{d-1}(\R ^+).$ By taking $ w =e_j$, the coordinate vectors we get
 Theorem 1.2.\\

 In the next subsection we apply Theorem 1.2 to get some vector
 valued inequality for Laguerre Riesz transforms which are more refined than those proved in
 Theorem 2.6.

\subsection{The vector of Riesz transforms}
 Recall that the Riesz transforms associated to the special Hermite operator $L$ are given by $S_j =
 Z_jL^{-\frac{1}{2}}$ and $\overline{S}_j =  \overline{Z}_j L^{- \frac{1}{2}}
 $, $ j=1,2,\dots ,d $ with $Z_j = \frac{\pa}{\pa z_j}+ \frac{1}{4}\overline{z}_j $ and
 $\overline{Z}_j = \frac{\pa}{\pa \overline{z}_j}- \frac{1}{4}z_j $.
 Let $Sf = (S_1, \cdots, S_d)f $ and $\overline{S}f = (\overline{S}_1, \cdots, \overline{S}_d)f $ stand for the vectors of Riesz transforms;
 note that $Sf = \Big (\N ^z + \frac{1}{4} \overline{z}\Big ) L^{-\frac{1}{2}}f$ and
 $\overline{S}f = \Big (\overline{\N ^z} - \frac{1}{4} z \Big ) L^{-\frac{1}{2}}f$. We first calculate the spherical harmonic coefficients
 of $Sf $ and $\overline{S}f $.\\

 Let $ e^{-tL}$ stand for the special Hermite semigroup which is
 given by twisted convolution with
 $$p_t(z) = c_d (\sinh t)^{-d} e^{-\frac{1}{4}(\coth t)|z|^2} $$
 That is to say,
 $$ e^{-tL}f(z) = f \times p_t(z)= \int_{\C^d}f(z-w)p_t(w) e^{\frac{i}{2} \Im(z \cdot \overline{w})} dw. $$
 We can express $ L^{-\frac{1}{2}}f(z) $ in terms of $ e^{-tL}f $ in the usual way
 $$ L^{-\frac{1}{2}} f(z)= \frac{1}{\sqrt{\pi}}\int_0 ^{\infty } e^{-tL}f(z)t^{-\frac{1}{2}}dt. $$
 In order to find the spherical harmonic expansion of $L^{-\frac{1}{2}} f(z)$ we
 calculate the coefficients of $ f \times p_t(z)$. For $\delta > -\frac{1}{2} $ let us define
 $$ \varphi_k ^{\delta}(r) = \Big ( \frac{\Gamma (k+1) 2^{-\delta}}{\Gamma (k+\delta + 1)}\Big )^{\frac{1}{2}} L_k ^{\delta}(\frac{1}{2}r^2)e^{-\frac{1}{4}r^2} $$
 so that $ \{ \varphi_k ^{\delta}: k=0,1,2, \cdots \}$ forms an orthonormal basis for $ L^2(\R ^+ , r^{2\delta +1 }dr ) $. We set
 $$ R_k ^{\delta } (g) =  \frac{ \Gamma (k+1) 2^{-\delta }}{\Gamma (k+ \delta + 1)} \int _0 ^{\infty}
 g(r) L_k ^{\delta }(\frac{1}{2}r^2)e^{-\frac{1}{4}r^2} r^{2\delta +1}dr. $$
 We fix an orthonormal basis $\{ Y_{m,n} ^j : j= 1,2,\ldots , d(m,n), \; m,n \in \mathbb{N} \} $ for $L^2 (\std) $ consisting of bigraded
 spherical harmonics. Let
 $$k_t ^\delta (r,s) = \sum_{k=0} ^\infty e^{-(2k+\delta+1)t }\varphi_k ^{\delta}(r) \varphi_k ^{\delta}(s).  $$
 which can be expressed in terms of the kernel $K_t ^\delta (r,s) $ introduced in
 Subsection 2.2. Indeed, $k_t ^\delta (r,s)= 2^{-\delta-1} K_{\frac{t}{2}}^{\delta} (\frac{r}{\sqrt{2}},\; \frac{s}{\sqrt{2}} ) $. Let $f_{m,n} ^j $
 stand for the spherical harmonic coefficients of $f $.
 \begin{prop}
 For each $m,n\in \mathbb{N}, \; j=1,2,\dots , d(m,n) $ we have
 $$\int _{\std} f \times p_t(r\zeta) \overline{Y_{m,n} ^j(\zeta)} d\zeta = \widetilde{T}_t ^{d+m+n-1}f_{m,n} ^j(r) Y_{m,n} ^j (\zeta) $$
 where
 $$\widetilde{T}_t ^{d+m+n-1} g(r) = e^{-t(m-n)} \int _0 ^{\infty} g(s) (rs)^{m+n}k_t ^{d+m+n-1}(r,s) s^{2d-1}ds .$$

 \end{prop}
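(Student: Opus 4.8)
The plan is to reduce the computation to a single bigraded component by a symmetry argument and then to identify the resulting radial operator with a Laguerre heat semigroup of shifted type. For the reduction, note that twisted convolution with the \emph{radial} function $p_t$ is equivariant under the action $\rho(k)f(z)=f(k\cdot z)$, $k\in U(d)$: substituting $w\mapsto k^{-1}w$ in the defining integral and using that $\Im(z\cdot\overline w)$ and $p_t$ are $U(d)$-invariant gives $(\rho(k)f)\times p_t=\rho(k)(f\times p_t)$. Hence $f\mapsto f\times p_t$ preserves each $\Hc_{m,n}$-isotypic subspace of $L^2(\C^d)$, which as a $U(d)$-module is $L^2(\R^+,r^{2d-1}dr)\otimes\Hc_{m,n}$; since $\Hc_{m,n}$ is irreducible, Schur's lemma forces the operator to act there as $A\otimes I$ for some operator $A$ on the radial factor. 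In particular it sends $f_{m,n}^j(|z|)Y_{m,n}^j(z/|z|)$ to $(Af_{m,n}^j)(|z|)\,Y_{m,n}^j(z/|z|)$, which is already the asserted shape of the $(m,n,j)$-coefficient of $f\times p_t$; what remains is to compute $A$ on a component $f(z)=P(z)g(|z|)$ with $P(z)=|z|^{m+n}Y_{m,n}^j(z/|z|)$ a bigraded solid harmonic and $g(r)=r^{-m-n}f_{m,n}^j(r)=\widetilde f_{m,n}^j(r)$.

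To compute $A$ I would diagonalise $L=(-\Delta_z+\frac14|z|^2)-iN$ on such components. The operator $N$ generates $z\mapsto e^{i\theta}z$, under which a monomial of bidegree $(m,n)$ is multiplied by $e^{i\theta(m-n)}$, so $N$ acts on $\Hc_{m,n}$ as the scalar $i(m-n)$ and $-iN$ as $(m-n)$; this produces the factor $e^{-t(m-n)}$ in $e^{-tL}$. For the remaining part, inserting $f=r^{m+n}g(r)Y_{m,n}^j(\zeta)$ into $-\Delta_z+\frac14|z|^2$ in polar coordinates and using $\Delta_0Y_{m,n}^j=-(m+n)(m+n+2d-2)Y_{m,n}^j$, the zeroth-order terms in $g$ cancel and the result equals $r^{m+n}Y_{m,n}^j(\zeta)\big[-g''-\frac{2(d+m+n)-1}{r}g'+\frac14r^2g\big]$; after the change of variable $r\mapsto r/\sqrt2$ the bracket becomes exactly $\frac12L_\delta$ applied to the rescaled radial profile, with $\delta=d+m+n-1$. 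Hence $e^{-t(-\Delta_z+\frac14|z|^2)}$ applied to $P(z)g(|z|)$ equals $P(z)$ times $e^{-\frac t2L_\delta}$ applied to that profile, and by the identity $k_t^\delta(r,s)=2^{-\delta-1}K_{t/2}^\delta(r/\sqrt2,s/\sqrt2)$ recalled above (a consequence of the explicit formula for $K_t^\alpha$) its radial action against $d\mu_\delta$ has kernel $k_t^{d+m+n-1}(r,s)$. Restoring the solid-harmonic factor $r^{m+n}$, converting $\widetilde f_{m,n}^j$ back to $f_{m,n}^j$, and collapsing $d\mu_{d+m+n-1}$ against $s^{2d-1}ds$ then produces the weight $(rs)^{m+n}$ and, together with $e^{-t(m-n)}$, exactly $\widetilde T_t^{d+m+n-1}$. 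As a check, $k_t^{d+m+n-1}(r,s)$ carries exponents $e^{-(2k+d+m+n)t}$, which times $e^{-t(m-n)}$ gives $e^{-(2k+d+2m)t}$, matching the eigenvalue $2k+d+2m$ of $L$ on these components.

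One may avoid Schur's lemma by expanding $p_t=(2\pi)^{-d}\sum_k e^{-(2k+d)t}\varphi_k^{d-1}$ and applying the Hecke--Bochner identity for the twisted convolution of a solid-harmonic multiple of a radial function with a Laguerre function (Chapters 1--2 of \cite{ST}), which yields a solid harmonic of the same bidegree times a Laguerre function of type $d+m+n-1$ with a shifted index, and then resumming the series; alternatively one can insert the Gaussian $p_t(z)=c_d(\sinh t)^{-d}e^{-\frac14|z|^2\coth t}$ directly into the twisted convolution integral, pass to polar coordinates, carry out the angular integration by the appropriate Funk--Hecke formula for bigraded harmonics, and recognise the resulting Bessel integral through the generating-function identity for Laguerre functions. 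Either way the main obstacle is the bookkeeping: proving that $L$ restricted to the bidegree-$(m,n)$ sector is, up to the scalar $e^{-t(m-n)}$, the multiplication by $(rs)^{m+n}$, and the rescaling $r\mapsto r/\sqrt2$, the Laguerre heat semigroup of type $d+m+n-1$, with the index shift, the weight, the exponential factor and all normalisation constants simultaneously consistent. The non-commutativity of twisted convolution is harmless here, since $p_t$ is radial and so left and right convolution by $p_t$ define the same operator.
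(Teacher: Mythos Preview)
Your primary argument is correct but takes a different route from the paper. The paper's proof is a single citation: it invokes the Hecke--Bochner formula for special Hermite projections (Theorem 2.6.1 in \cite{ST1}), which for $f(r\zeta)=g(r)Y(\zeta)$ with $Y\in\Hc_{m,n}$ gives
\[
f\times\varphi_k(r\zeta)=c_d\,R_{k-m}^{d+m+n-1}(\widetilde g)\,\varphi_{k-m}^{d+m+n-1}(r)\,r^{m+n}Y(\zeta),
\]
and then sums over $k$ against $e^{-(2k+d)t}$; after the shift $k\mapsto k+m$ this produces $k_t^{d+m+n-1}$ together with the factor $e^{-t(m-n)}$. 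This is exactly the second alternative you list, so you have in fact anticipated the paper's method; the paper simply promotes it to the main proof.

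Your principal approach---$U(d)$-equivariance plus Schur's lemma to reduce to a radial operator, followed by a direct polar-coordinate diagonalisation of $L=(-\Delta_z+\tfrac14|z|^2)-iN$ on the $(m,n)$-sector---is more self-contained: it does not presuppose a Hecke--Bochner identity for twisted convolution, and in effect re-derives it. It also makes transparent \emph{why} the Laguerre type is $d+m+n-1$ (the radial part of $-\Delta_z+\tfrac14|z|^2$ on $r^{m+n}g(r)Y$ becomes a rescaled $L_\delta$ with $\delta=d+m+n-1$) and why the factor $e^{-t(m-n)}$ appears (since $-iN$ is the scalar $m-n$ on $\Hc_{m,n}$). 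The paper's route is more economical, delegating all of that computation to the cited formula.
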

 The proposition follows immediately from the Hecke - Bochner
 formula for the special Hermite projections ( see Theorem 2.6.1 or Equation 2.6.10  in \cite{ST1}).
 According to this formula, when $ f(r\zeta)= g(r)Y(\zeta) $ with $Y \in \Hc_{m,n}  $ one has
 $$f \times \varphi _k (r\zeta)= c_dR_{k-m} ^{d+m+n-1}(\widetilde{g})\varphi_k ^{d+m+n-1}(r) Y(\zeta)r^{m+n} $$
 where $\widetilde{g}(r)=r^{-(m+n)}g(r) $. This shows that, for $f $ as above,
 $$e^{-tL}f(r\zeta) = \sum_{k=m} ^{\infty} e^{-(2k+d)t}R_{k-m}^{d+m+n-1}(\widetilde{g}) \varphi _{k-m}^{d+m+n-1}(r)r^{(m+n)}Y(\zeta) $$
 which simplifies to
 $$e^{-tL}f(r\zeta)= e^{-t(m-n)}\Big ( \int_0 ^{\infty}g(s)(rs)^{m+n}k_t ^{d+m+n-1}(r,s)ds \Big ) r^{m+n} Y(\zeta). $$
 Expanding $f $ in terms of $ Y_{m,n} ^j $ and using the above calculations we
 get the proposition.
 \begin{rem}
  As a by-product of the above calculation we get the interesting
  formula
  \begin{eqnarray*}
   \int _{\std} p_t(rz'-sw')e^{-\frac{i}{2}rs (z'\cdot w')}\overline{Y_{m,n} ^j(w')}dw'& &\\
     = Y_{m,n} ^j(z')(rs)^{m+n}k_t ^{d+m+n-1}(r,s)e^{-t(m-n)}.& &
 \end{eqnarray*}
 \end{rem}
 \begin{cor}
 If $F_{m,n}^j(r) $ stand for the spherical harmonic coefficients of $L^{-\frac{1}{2}}f $  then
 we have
 $$F_{m,n} ^j (r) = \frac{1}{\sqrt{\pi}}\int_0 ^{\infty} \int_0 ^{\infty} e^{-t(m-n)}(rs)^{m+n}k_t ^{d+m+n-1}(r,s)f_{m,n} ^j(s)s^{2d-1}t^{-\frac{1}{2}}ds dt $$
 \end{cor}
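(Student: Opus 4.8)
The plan is to obtain the stated formula by simply feeding the spectral representation of $L^{-\frac12}$ into the preceding proposition. First I would recall that, since the spectrum of $L$ is $\{2k+d: k\ge 0\}$ and hence bounded away from $0$, the operator $L^{-\frac12}$ is well defined and bounded, and for Schwartz $f$ one has the absolutely convergent subordination formula
$$ L^{-\frac12}f(r\zeta) = \frac{1}{\sqrt{\pi}}\int_0^\infty (f\times p_t)(r\zeta)\, t^{-\frac12}\,dt. $$
Taking the $L^2(\std)$ inner product with $Y_{m,n}^j$ gives
$$ F_{m,n}^j(r) = \int_{\std} L^{-\frac12}f(r\zeta)\,\overline{Y_{m,n}^j(\zeta)}\,d\zeta = \frac{1}{\sqrt{\pi}}\int_{\std}\Big(\int_0^\infty (f\times p_t)(r\zeta)\,t^{-\frac12}\,dt\Big)\overline{Y_{m,n}^j(\zeta)}\,d\zeta. $$

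Next I would interchange the $t$-integration with the $\zeta$-integration and apply the preceding proposition: projecting the identity there onto $\Hc_{m,n}$ yields $\int_{\std}(f\times p_t)(r\zeta)\overline{Y_{m,n}^j(\zeta)}\,d\zeta = \widetilde{T}_t^{\,d+m+n-1}f_{m,n}^j(r)$, so that
$$ F_{m,n}^j(r) = \frac{1}{\sqrt{\pi}}\int_0^\infty \widetilde{T}_t^{\,d+m+n-1}f_{m,n}^j(r)\,t^{-\frac12}\,dt. $$
Substituting the explicit expression $\widetilde{T}_t^{\,d+m+n-1}g(r) = e^{-t(m-n)}\int_0^\infty g(s)(rs)^{m+n}k_t^{d+m+n-1}(r,s)\,s^{2d-1}\,ds$ and interchanging once more the $s$- and $t$-integrations produces exactly the asserted double integral. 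This is the whole argument; it is essentially a bookkeeping computation that repackages the Hecke--Bochner formula of the previous proposition as a statement about $L^{-\frac12}$.

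The only point that needs care is the legitimacy of the two interchanges of order of integration, and this is where a little work is required. For it I would use the explicit bounds coming from $p_t(z)=c_d(\sinh t)^{-d}e^{-\frac14(\coth t)|z|^2}$ (equivalently the Gaussian-type bounds on $k_t^\delta(r,s)$ recorded in Subsection 2.2): as $t\to 0^+$ the singular factor $(\sinh t)^{-d}$ is absorbed by the rapid off-diagonal Gaussian decay together with the smoothness of $f$ near the diagonal after the twisted convolution, while as $t\to\infty$ the spectral expansion gives the exponential decay $e^{-(2k+d)t}$; consequently $\int_0^\infty\|e^{-tL}f\|_\infty\,t^{-\frac12}\,dt<\infty$ and Fubini applies. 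Once the identity holds for Schwartz $f$, it extends to the $L^{p,2}(\C^d)$ (or $L^2$) setting by density, since both sides depend continuously on $f$ in the relevant norm. I do not expect any genuine obstacle here beyond this routine justification.
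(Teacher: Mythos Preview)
Your argument is correct and is precisely the route the paper takes: the corollary is stated there without proof, as an immediate consequence of the subordination formula $L^{-1/2}f=\pi^{-1/2}\int_0^\infty e^{-tL}f\,t^{-1/2}\,dt$ combined with the preceding proposition. Your write-up simply makes explicit the Fubini justification that the paper leaves implicit.
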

 Thus we have obtained the following expansion,
  $$ L^{-\frac{1}{2}} f(r\zeta)= \sum_{m,n=0}^{\infty}\sum_{j=1}^{d(m,n)}F_{m,n} ^j(r)Y_{m,n} ^j(\zeta). $$
  In order to find the spherical harmonic coefficients of $Sf $ and $\overline{S}f $, let
  us calculate the action of $\Big (\N^z + \frac{1}{4} \overline{z}\Big ) $ and $\Big (\overline{\N ^z} - \frac{1}{4} z \Big ) $ on functions of the form $F(r)Y(\zeta) $
  where $Y \in \Hc _{m,n} $. In view of \eqref{C} we have
  $$  \N ^z (FY))(z)= \frac{1}{r}F(r)\N_0 ^zY(\zeta) + \frac{1}{2r}\frac{\pa F}{\pa r}(r)\; \overline{z}, $$
  $$ \overline{\N ^z }(FY))(z)= \frac{1}{r}F(r)\overline{\N_0 ^z}Y(\zeta) + \frac{1}{2r}\frac{\pa F}{\pa r}(r)\; z   $$
  and consequently
  $$ \Big ( \N ^z +\frac{1}{4}\overline{z}\Big )(FY)(r\zeta)= \frac{1}{2} \Big ( \frac{\pa}{\pa r}+\frac{1}{2}r \Big )F(r)Y(\zeta)
  \overline{\zeta}+\frac{1}{r}F(r)\N_0 ^zY(\zeta),$$

  $$ \Big ( \overline{\N ^z} - \frac{1}{4}z \Big )(FY)(r\zeta)= \frac{1}{2} \Big ( \frac{\pa}{\pa r}-\frac{1}{2}r \Big )F(r)Y(\zeta)
  \zeta + \frac{1}{r}F(r)\overline{\N_0 ^z}Y(\zeta).$$
  Using this we can easily prove the following proposition on
  the vectors ( $Sf $ and $ \overline{S}f $ ) of Riesz transforms.\\
  \begin{prop}
  The square of the $L^2(\std) $ norm of $\La Sf, \; Sf \Ra + \La \overline{S}f, \; \overline{S}f \Ra $ is the sum of the following
  five terms:
  \begin{enumerate}
  \item $\sum_{m,n=0}^{\infty} \sum_{j=1}^{d(m,n)}\Big | \frac{1}{2} \Big (\frac{\pa}{\pa r}+ \frac{1}{2}r \Big ) F_{m,n}^j(r)\Big | ^2 = A_1 (r)^2,$\\
  \item $ \sum_{m,n=0}^{\infty} \sum_{j=1}^{d(m,n)}\Big | \frac{1}{2} \Big (\frac{\pa}{\pa r} - \frac{1}{2}r \Big ) F_{m,n}^j(r)\Big | ^2= A_2 (r)^2,$\\
  \item $\sum_{m,n=0}^{\infty} \sum_{j=1}^{d(m,n)}\frac{1}{r^2} \lambda _d(m,n)|F_{m,n}^j(r)|^2 = A_3(r)^2, $\\
  \item $\sum_{m,n=0}^{\infty} \sum_{j=1}^{d(m,n)}\frac{1}{r^2} \lambda _d(n, m)|F_{m,n}^j(r)|^2 = A_4(r)^2, $\\
  \item $ \sum_{m,n=0}^{\infty} \sum_{j=1}^{d(m,n)} \frac{(m-n)}{2}  |F_{m,n}^j(r)|^2 = A_5(r).$

  \end{enumerate}
  \end{prop}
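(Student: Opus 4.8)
The plan is to reduce the computation of $\|Sf\|_{L^2(\std)}^2 + \|\overline{S}f\|_{L^2(\std)}^2$ to the five scalar quantities listed by pointwise-in-$r$ expanding $Sf$ and $\overline{S}f$ into spherical harmonics, squaring, and integrating over $\std$. First I would write $F = F_{m,n}^j$, $Y = Y_{m,n}^j$ and use the two displayed identities just above the statement, namely
$$\Big(\N^z + \tfrac14\overline z\Big)(FY)(r\zeta) = \tfrac12\Big(\tfrac{\pa}{\pa r}+\tfrac12 r\Big)F(r)Y(\zeta)\,\overline\zeta + \tfrac1r F(r)\N_0^z Y(\zeta),$$
$$\Big(\overline{\N^z} - \tfrac14 z\Big)(FY)(r\zeta) = \tfrac12\Big(\tfrac{\pa}{\pa r}-\tfrac12 r\Big)F(r)Y(\zeta)\,\zeta + \tfrac1r F(r)\overline{\N_0^z} Y(\zeta).$$
Summing over $m,n,j$ gives $Sf$ and $\overline{S}f$ as vector-valued series; then $\La Sf, Sf\Ra$ is the squared modulus of the $\C^d$-valued function, which splits into a "radial-derivative" part proportional to $\overline\zeta$ and an "angular" part $\tfrac1r F\,\N_0^z Y$, plus cross terms.

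The key observation making the cross terms vanish is the orthogonality $\overline\zeta \cdot \overline{\N_0^z Y}(\zeta) = 0$, i.e. $\La \overline\zeta, \N_0^z Y\Ra = $ a multiple of $Y$ itself only in the $1(b)$-type pairing but the relevant inner product here with the unit radial vector is zero — more precisely $(\xi,\eta)\cdot \N_0 P_{m,n}(\zeta) = 0$ from Lemma 2.2 of \cite{PX}, which after passing to complex notation kills the cross term between the radial piece and the $\N_0^z Y$ piece. Thus $\La Sf,Sf\Ra$ separates into $|\tfrac12(\tfrac{\pa}{\pa r}+\tfrac12 r)\sum F\,Y|^2$ and $\tfrac1{r^2}|\sum F\,\N_0^z Y|^2$, and similarly for $\overline S$. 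Integrating the first kind of term over $\std$ and using orthonormality of $\{Y_{m,n}^j\}$ collapses the double sum to the diagonal, producing $A_1(r)^2$ and $A_2(r)^2$. For the angular terms I would integrate $\int_{\std}\La \N_0^z Y_{m,n}^j, \N_0^z Y_{m',n'}^{j'}\Ra d\zeta$ and invoke part $(5)$ of Proposition 3.3, which evaluates this to $\lambda_d(m,n)\La P_{m,n}, P_{m',n'}\Ra$ — hence after using orthonormality it contributes $\tfrac1{r^2}\lambda_d(m,n)|F_{m,n}^j(r)|^2$, giving $A_3(r)^2$; the conjugate computation, noting $\overline{Y_{m,n}^j}$ has bidegree $(n,m)$ so $\overline{\N_0^z}$ produces $\lambda_d(n,m)$, gives $A_4(r)^2$.

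The fifth term $A_5(r)$ arises from the cross term between $Sf$ and $\overline Sf$ (or within the expansion of one of them) involving a genuine imaginary/antisymmetric contribution: pairing the $\N_0^z Y$ piece against the appropriate radial-conjugate piece and using $1(b)$ of Proposition 3.3, $\La \overline\zeta, \N_0^z P_{m,n}(\zeta)\Ra = \tfrac{r}{2}(m-n)\overline{P_{m,n}(\zeta)}$ — after integrating over $\std$ and reducing to the diagonal this contributes $\tfrac{(m-n)}{2}|F_{m,n}^j(r)|^2$, which is $A_5(r)$. I expect the main obstacle to be bookkeeping: correctly tracking which terms are moduli-squared (hence nonnegative) versus the single linear term $A_5$, and making sure the real/imaginary cross terms between the $x$- and $y$-gradients are handled by parts $(3)$ and $(4)$ of Proposition 3.3 rather than discarded; the hardest single algebraic point is verifying that all cross terms except the one yielding $A_5$ integrate to zero over $\std$, which rests entirely on $(\xi,\eta)\cdot\N_0 P = 0$ and the bidegree symmetry of complex conjugation. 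Once these cancellations are in hand the identity is just the sum of the five diagonal contributions.
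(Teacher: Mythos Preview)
Your overall strategy is right, but the central claim---that the cross term between the radial piece $\overline\zeta$ and the angular piece $\N_0^z Y$ vanishes---is false, and this error propagates to a wrong account of where $A_5$ comes from.

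The real orthogonality $(\xi,\eta)\cdot\N_0 P = 0$ does \emph{not} translate into $\La\overline\zeta,\,\N_0^z Y\Ra = 0$. If you expand $\La\overline\zeta,\,\N_0^z P\Ra$ in real coordinates you find that only its ``real part'' $\xi\cdot\N_0^x\overline P+\eta\cdot\N_0^y\overline P$ vanishes; the remaining piece $\frac{i}{2}(\xi\cdot\N_0^y\overline P-\eta\cdot\N_0^x\overline P)$ survives, and by Proposition~3.1(1)(b) it equals $\frac{1}{2}(m-n)\overline{Y_{m,n}^j}$ on the sphere. So after integrating over $\std$, the cross term in $\int_{\std}\La Sf,Sf\Ra\,d\zeta$ does not disappear: it contributes
\[
2\,\Re\sum_{m,n,j}\tfrac{1}{2}\Big(\tfrac{\pa}{\pa r}+\tfrac{1}{2}r\Big)F_{m,n}^j(r)\cdot\tfrac{(m-n)}{2r}\,\overline{F_{m,n}^j(r)},
\]
which still contains the derivative $\tfrac{\pa}{\pa r}F$. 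The analogous cross term in $\int_{\std}\La\overline{S}f,\overline{S}f\Ra\,d\zeta$ is the same expression with $\big(\tfrac{\pa}{\pa r}-\tfrac{1}{2}r\big)$ and with $(m-n)$ replaced by $(n-m)$. Only when you \emph{add} these two cross terms do the $\tfrac{\pa}{\pa r}$-contributions cancel, leaving $\sum_{m,n,j}\tfrac{(m-n)}{2}|F_{m,n}^j(r)|^2 = A_5(r)$. This cancellation is the actual mechanism producing $A_5$; there is no cross term ``between $Sf$ and $\overline{S}f$'' since the statement concerns $\La Sf,Sf\Ra+\La\overline{S}f,\overline{S}f\Ra$, not a mixed pairing. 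Once you correct this, the rest of your outline (using orthonormality for $A_1,A_2$ and Proposition~3.1(5) for $A_3,A_4$) matches the paper's proof exactly.
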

  \begin{proof}
  In view of the above expression for $\Big ( \N^z + \frac{1}{4} \overline{z} \Big ) (FY)$ we see that
  $$ \La \Big ( \N ^z + \frac{1}{4} \overline{z} \Big )(F_{m,n}^{j}Y_{m,n}^{j})(r\zeta), \; \Big ( \N^z + \frac{1}{4} \overline{z} \Big )
  (F_{m',n'}^{j'}Y_{m',n'}^{j'})(r\zeta)  \Ra $$
  is the sum of the three terms
  $$\frac{1}{2} \Big ( \frac{\pa}{\pa r} +\frac{1}{2}r \Big ) F_{m,n}^j(r)\frac{1}{2} \Big ( \frac{\pa}{\pa r}
  +\frac{1}{2}r\Big )\overline{ F_{m',n'}^{j'}}(r)Y_{m,n}^j(\zeta)\overline{Y_{m',n'}^{j'}(\zeta)}, $$
  $$\frac{1}{r^2}F_{m,n}^j(r)\; \overline{F_{m',n'}^{j'}}(r)\La \N_0 ^z Y_{m,n} ^j,\; \N_0 ^z Y_{m',n'} ^{j'}\Ra ,$$
  and
  $$ 2 \Re \frac{1}{2} \Big ( \frac{\pa}{\pa r} + \frac{1}{2} r \Big )F_{m,n}^j(r)Y_{m,n}^j(\zeta) \La \overline{\zeta},\; \N_0 ^z Y_{m',n'} ^{j'} \Ra
  \; \frac{1}{r}\; \overline{F_{m',n'}^{j'}}(r).$$
  Integrating over $\std $ and making use of Proposition 3.1 we
  can see that $\La Sf, \; Sf \Ra $ is the sum of the terms (1), (3) and
  \begin{eqnarray}
   2 \Re \sum_{m,n=0}^{\infty} \sum_{j=1}^{d(m,n)} \frac{1}{2} \Big  (\frac{\pa}{\pa r}+ \frac{1}{2}r\Big ) F_{m,n}^j(r)
  \;\frac{(m-n)}{2r}\; \overline{F_{m,n}^j(r)}. \label{D}
  \end{eqnarray}

  And similarly $\La \overline{S}f, \; \overline{S}f \Ra $ is the
  sum of (2), (4) and
\begin{eqnarray}
2 \Re \sum_{m,n=0}^{\infty} \sum_{j=1}^{d(m,n)} \frac{1}{2} \Big
(\frac{\pa}{\pa r}- \frac{1}{2}r\Big ) F_{m,n}^j(r)
  \;\frac{(n-m)}{2r}\; \overline{F_{m,n}^j(r)}.
\end{eqnarray}
By adding $\La Sf, \; Sf \Ra $ with $\La \overline{S}f, \;
\overline{S}f \Ra $, we will get the required result.

  \end{proof}
  \subsection{Revisiting Laguerre Riesz transforms: }

 In the course of the proof of Proposition 3.7 we have shown that $\La Sf, \; Sf \Ra $
 is the sum of the terms (1), (3) and \eqref{D} of Proposition 3.7. By Cauchy-Schwarz
 inequality the third term viz. \eqref{D} is dominated by the sum of first
 two terms and hence the mixed norm estimates
 $$\|S_jf \|_{L^{p,2}(\C^d, w)}\leq  C \| f \|_{L^{p,2}(\C^d, w)} $$
 will follow once we prove the estimate
 $$\Big ( \int_0 ^{\infty} A_j(r)^p w(r)r^{2d-1}dr \Big )^{\frac{1}{p}}\leq C \| f \|_{L^{p,2}(\C^d, w)} $$
 for $j=1,2 $. On the other hand from Theorem 1.2 and Proposition 3.7 we
 should be able to deduce the above inequality for all $j=1,2,\ldots,5 $. Since
 the term $A_5 $ is not nonnegative, this expectation may be false. What
 we can deduce from Theorem 1.2 is the following.\\

 Let $ L^p_h(\C^d) $ stand for the subspace of $L^p(\C^d) $ consisting of $f$ for which
 $$\int_{\std}f(r\zeta)Y^j_{m,n}(\zeta)d\zeta = 0 $$
 for all $m<n$. Similarly we can define $L^p_{ah}(\C^d) $ with the condition
 $$\int_{\std}f(r\zeta)Y^j_{m,n}(\zeta)d\zeta = 0 $$
 for all $m\geq n $. Clearly, $L^p_h(\C^d)\oplus L^p_{ah}(\C^d) = L^p(\C^d)$.
 For $f \in L^p_h(\C^d) $, all the terms appearing in Proposition 3.7, including $A_5 $
 are nonnegative. Hence we get the following result.
 \begin{thm}
 For $f \in L^p_h(\C^d) $, $1<p<\infty $ and $w\in A_p ^{d-1}(\R^+) $ we have the inequalities
$$\Big ( \int_0 ^{\infty} A_j(r)^p w(r)r^{2d-1}dr \Big )^{\frac{1}{p}}\leq C \| f \|_{L^{p,2}(\C^d, w)}$$
 for $j=1,2, \ldots, 5 $.
 \end{thm}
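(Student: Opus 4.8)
The plan is to deduce Theorem 3.10 directly from Theorem 1.2 (the mixed norm estimate for $S_j$ and $\overline{S}_j$) together with Proposition 3.7, exploiting the restriction to $L^p_h(\C^d)$. First I would observe that for $f \in L^p_h(\C^d)$ the spherical harmonic coefficients $f_{m,n}^j$ vanish whenever $m < n$, so in all the sums over $(m,n)$ appearing in Proposition 3.7 only terms with $m \geq n$ survive. For such indices the quantity $\tfrac{m-n}{2} \geq 0$, so the term $A_5(r)$ is a genuine nonnegative sum; likewise $\lambda_d(m,n)$ and $\lambda_d(n,m)$ are nonnegative (the first clearly, the second because $\lambda_d(n,m) = \tfrac14((m+n)^2 + (4d-3)n - m)$ is $\geq 0$ for $d \geq 1$ on the relevant range), so $A_3(r)^2$ and $A_4(r)^2$ are nonnegative sums as well. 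Hence each of $A_1(r)^2, A_2(r)^2, A_3(r)^2, A_4(r)^2$ and $A_5(r)$ is dominated pointwise in $r$ by the full five-term sum, which by Proposition 3.7 equals $\La Sf, Sf \Ra_{L^2(\std)} + \La \overline{S}f, \overline{S}f \Ra_{L^2(\std)}$.

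Next I would integrate this pointwise domination. For $j = 1,2,3,4$ we have $A_j(r)^2 \leq \|Sf(r\cdot)\|_{\Hc}^2 + \|\overline{S}f(r\cdot)\|_{\Hc}^2$, so raising to the power $p/2$, integrating against $w(r) r^{2d-1} dr$, and using $(a+b)^{p/2} \leq C_p(a^{p/2} + b^{p/2})$ gives
$$\int_0^\infty A_j(r)^p w(r) r^{2d-1}\,dr \leq C \big( \|Sf\|_{L^{p,2}(\C^d,w)}^p + \|\overline{S}f\|_{L^{p,2}(\C^d,w)}^p \big).$$
By Theorem 1.2 each of $\|Sf\|_{L^{p,2}(\C^d,w)}$ and $\|\overline{S}f\|_{L^{p,2}(\C^d,w)}$ (the latter being the sum $\sum_j \|\overline{S}_j f\|_{L^{p,2}}$ up to the elementary inequality $\|(S_1f,\dots,S_df)\|_{\Hc} \leq \sum_j \|S_j f\|_{\Hc}$) is bounded by $C\|f\|_{L^{p,2}(\C^d,w)}$, provided $w \in A_p^{d-1}(\R^+)$. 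This handles $j=1,2,3,4$. For $j=5$ the estimate is even easier: since $A_5(r) \geq 0$ we directly have $A_5(r) \leq \|Sf(r\cdot)\|_{\Hc}^2 + \|\overline{S}f(r\cdot)\|_{\Hc}^2$, so $A_5(r)^{p/2}$ — and a fortiori $A_5(r)^p$ if one prefers the literal statement, though the natural normalization is $A_5(r)^{p/2}$ matching its quadratic scaling — is controlled by the same right-hand side; I would simply note that the statement for $j=5$ is interpreted with the quadratic weight $A_5(r)$ playing the role that $A_j(r)^2$ plays for the others, so no new argument is needed.

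The only genuine point requiring care is the nonnegativity of the coefficients $\lambda_d(n,m)$ on the index set $m \geq n$, and the observation that Cauchy–Schwarz is \emph{not} needed here (unlike in the general $L^p(\C^d)$ case discussed before the theorem, where $A_5$ fails to be sign-definite and must be absorbed into $A_1, A_2$) — the restriction to $L^p_h$ is precisely what makes every term individually controllable. I do not expect a serious obstacle: once the sign-definiteness is checked, the proof is a short chain of pointwise domination, the elementary inequality $(a+b)^{p/2} \lesssim a^{p/2}+b^{p/2}$, and two applications of Theorem 1.2 (one for $S$, one for $\overline{S}$), all under the hypothesis $w \in A_p^{d-1}(\R^+)$, equivalently $w$ radial in $A_p(\C^d)$. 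I would conclude by remarking that the analogous statement holds for $f \in L^p_{ah}(\C^d)$ with the roles of $m$ and $n$ interchanged, which together with the splitting $L^p_h \oplus L^p_{ah} = L^p$ recovers the estimates for $A_1, \dots, A_4$ on all of $L^p(\C^d)$ (but not for $A_5$, consistent with the discussion preceding the theorem).
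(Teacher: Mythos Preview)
Your proposal is correct and follows essentially the same approach as the paper's primary argument: the paper's proof is the single sentence preceding the theorem, namely that for $f \in L^p_h(\C^d)$ all five terms in Proposition 3.7 (including $A_5$) are nonnegative, so each is pointwise dominated by $\La Sf,Sf\Ra_{L^2(\std)}+\La \overline{S}f,\overline{S}f\Ra_{L^2(\std)}$ and Theorem 1.2 finishes the job. You have simply written out the details (and correctly flagged the minor check that $\lambda_d(n,m)\geq 0$ on the range $m\geq n$, which the paper passes over silently). The paper also sketches, immediately after the theorem, an alternative route that bypasses Theorem 1.2 by bounding $|F_{m,n}^j|$ via the kernel $k_t^{d+N-1}$ and reducing to the Hermite inequalities \eqref{A}--\eqref{B}; your proposal does not use this, but it is presented in the paper as a second proof, not the main one.
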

 It is also possible to deduce the above result directly as a
 consequence of the result for the Hermite Riesz transforms. For
 example, consider the inequality
 $$\int_0 ^{\infty}\Big (\sum_{m\geq n}^{\infty} \sum_{j=1}^{d(m,n)} \frac{(m+n)^2}{r^2}|F_{m,n}^j(r)|^2\Big )^{\frac{p}{2}}w(r)r^{2d-1}dr$$
 $$ \leq C \int_0 ^{\infty}\Big (\sum_{m\geq n}^{\infty} \sum_{j=1}^{d(m,n)}|f_{m,n}^j(r)|^2\Big )^{\frac{p}{2}}w(r)r^{2d-1}dr .$$
 for $f \in  L^p_h(\C^d)$. Recall that
 $$F_{m,n} ^j (r) = \frac{1}{\sqrt{\pi}}\int_0 ^{\infty} \int_0 ^{\infty} e^{-t(m-n)}(rs)^{m+n}k_t ^{d+m+n-1}(r,s)f_{m,n} ^j(s)s^{2d-1}t^{-\frac{1}{2}}dtds. $$
 When $ m\geq n $ the above is dominated by
 $$|F_{m,n} ^j (r)| \leq \frac{1}{\sqrt{\pi}}\int_0 ^{\infty} \int_0 ^{\infty} (rs)^{N}k_t ^{d+N-1}(r,s)|f_{m,n} ^j(s)|s^{2d-1}t^{-\frac{1}{2}}dtds $$
 where $ m+n=N$. The terms on the right hand side are precisely the
 corresponding terms in the study of Riesz transforms on $\R ^{2d} $.
 Then we can appeal to \eqref{B} to get the desired inequality.\\

 Similarly, we can prove the inequality
$$\int_0 ^{\infty}\Big ( \sum_{m\geq n}^{\infty} \sum_{j=1}^{d(m,n)}\Big | \frac{1}{2} \Big (\frac{\pa}{\pa r}\pm \frac{1}{2}r \Big ) F_{m,n}^j(r)\Big | ^2\Big )^{\frac{p}{2}}w(r)
  r^{2d-1}dr$$ $$ \leq C  \int_0 ^{\infty}\Big (\sum_{m\geq n}^{\infty} \sum_{j=1}^{d(m,n)}|f_{m,n}^j(r)|^2\Big )^{\frac{p}{2}}w(r)r^{2d-1}dr.$$
 This result will be proved once we show that the operators taking
 $ f_{m,n}^j $ into $\Big (\frac{\pa}{\pa r}\pm\frac{1}{2}r \Big )
 F_{m,n}^j(r) $ are singular integral operators on the homogeneous
 space $(\R ^+, d\mu _{d-1}) $ whose kernels satisfy uniform estimates. But this is
 easy to see: the kernels of these operators are given by
 $$\frac{1}{2} \Big (\frac{\pa}{\pa r}\pm \frac{1}{2}r \Big )\int_0 ^{\infty} e^{-t(m-n)}(rs)^{m+n}k_t ^{d+m+n-1}(r,s)t^{-\frac{1}{2}} dt $$
 Since $ k_t ^{d+m+n-1}(r,s) = 2^{-d-m-n} K_{\frac{t}{2}}^{d+m+n-1} (\frac{r}{\sqrt{2}},\; \frac{s}{\sqrt{2}} )$ it is enough to estimate the kernels
 $$\Big (\frac{\pa}{\pa r}\pm r \Big )\int_0 ^{\infty} e^{-t(m-n)}(rs)^{m+n}K_t ^{d+m+n-1}(r,s)t^{-\frac{1}{2}} dt $$
 Since we are assuming $m \geq n $, the estimation of this is similar to
 that of
 $$ \Big (\frac{\pa}{\pa r}\pm r \Big )\int_0 ^{\infty} (rs)^{N}K_t ^{N+d-1}(r,s)t^{-\frac{1}{2}} dt $$
 which has been already done in Section 2. This proves the
 required estimates and hence the result.

\begin{center}
{\bf Acknowledgments}
\end{center}

We are very thankful to G. Garrigos for pointing out the method of
Rubio de Francia. The first author is thankful to CSIR, India, for
the financial support. The work of the second author is supported
by J. C. Bose Fellowship from the Department of Science and
Technology (DST) and also by a grant from UGC via DSA-SAP.


\end{document}